\xpatchcmd\swappedhead{~}{.~}{}{}
\theoremstyle{plain}
\newtheorem{FactCounter}{dummy}[section]
\newtheorem{Theorem}[FactCounter]{Theorem} %
\newtheorem{Lemma}[FactCounter]{Lemma} %
\newtheorem{Corollary}[FactCounter]{Corollary} %
\newtheorem{Conjecture}[FactCounter]{Conjecture} %
\theoremstyle{definition}
\newtheorem{Definition}[FactCounter]{Definition} %
\theoremstyle{remark}
\newtheorem{Remark}[FactCounter]{Remark} %
\newtheorem{Notation}[FactCounter]{Notation} %
\newtheorem{Example}[FactCounter]{Example} %
\newtheoremstyle{LayoutVoid}
  {1ex}
  {0ex}
  {\normalfont}
  {}
  {\bf}
  {.}
  {1ex}
  {}
\theoremstyle{LayoutVoid}
\numberwithin{equation}{section}
\DeclareMathOperator{\tr}{tr}
\DeclareMathOperator{\ad}{ad}
\DeclareMathOperator{\gl}{\mathfrak{gl}}
\DeclareMathOperator{\End}{End}
\DeclareMathOperator{\rad}{rad}
\DeclareMathOperator{\M}{M}
\DeclareMathOperator{\fsl}{\mathfrak{fsl}}
\DeclareMathOperator{\Sym}{Sym}
\DeclareMathOperator{\charr}{char}
\begin{document}

\title{Lie Theory Theorems over Positive Characteristic and Modular Lie Algebras}

\author{Eun H. Park}

\school{Mathematics}
\faculty{Science and Engineering}

\beforeabstract
Sometimes, it is very important to consider what type of setting is assumed when
studying a mathematical object. For example, in Galois theory, properties can com-
pletely change if we study a field extension over $F_p$ instead of a field over $\mathbb{Q}$. When we
consider base fields for modules, algebras, or vector spaces, we often recall commonly
used fields such as $\mathbb{C}$ and fields $F$ with $\charr F= p$.

Similar behavior arises in the study of Lie algebras. Properties that hold for Lie
algebras over a field of characteristic zero do not necessarily hold over a field of characteristic $p$. In general, we are more familiar with
studying Lie algebras and their representations over $\mathbb{C}$. However, an interesting fact
is that new properties can be discovered by studying the theory over fields of positive
characteristic.

Therefore, we will closely examine how theorems and properties in Lie algebra
theory do not hold or behave differently when the base field has characteristic p. In
fact, there is a related area of study known as \textit{modular Lie theory} that deals specifically
with this setting. In this theory, we study concepts such as the definition of restricted
Lie algebras, that is, Lie algebras $L$ equipped with a p-mapping $[p] : L \rightarrow L$, defined
as $x\mapsto x^{[p]}$, where the base field has prime characteristic. In other words, the theory
introduces a new tool—the $p$-mapping—for the study of modular Lie algebras.

In this project, we aim to study Lie algebras defined over fields of positive characteristic. Specifically, the main focus will be on how Lie algebras behave over such fields
and how we can develop the general framework of modular Lie theory based on the
insights and structures that arise in this setting.

\prefacesection{Acknowledgements}
I would like to express my sincere gratitude to Dr. David Stewart for suggesting this topic for my main project and for his guidance and support throughout.

\afterpreface


\chapter{Introduction}

This project investigates how fundamental properties and theorems in Lie theory are affected when the base field has prime characteristic.

We will briefly observe that the structural features of Lie algebras are closely tied to the characteristic of the underlying base field \( F \).

In Chapter 2, only the essential definitions and basic concepts of Lie algebras will be introduced for the main project. For example, we will define Lie algebras, their representations, adjoint representations, solvable Lie algebras, and, lastly, the Killing form. These concepts are important in Lie theory and also in modular Lie theory.

Since we are dealing with base fields of characteristic \( p \), it is important to understand why many theorems in Lie theory assume that the base field is algebraically closed. In fact, an algebraically closed field guarantees that all eigenvalues lie in the field, which is a fundamental aspect of linear algebra. In Chapter 3, we will discuss why the base field is assumed to be algebraically closed in Lie theory.

\(\mathfrak{sl}(2,F)\) is not only an important Lie algebra in classical Lie theory but also in modular Lie theory. Therefore, in Chapter 4, \(\mathfrak{sl}(2,F)\) will be introduced. Furthermore, \(\fsl(2,F)\), called the \textit{fake special linear Lie algebra}, will also be introduced in this chapter. \(\fsl(2,F)\) is important for this project in terms of proving that many theorems in Lie theory do not hold over a field of prime characteristic.

In Chapter 5, we will introduce how Lie theory theorems become false when the base field has characteristic \( p \). Important theorems in Lie theory—such as Lie's theorem, Cartan's criterion, the Weyl character formula, the Jordan decomposition, and representations of Lie algebras—do not hold in this setting and have counterexamples. While introducing counterexamples for each theorem, we will also explore how certain conditions in the theorems can be changed to make them valid in a field of prime characteristic. Therefore, this chapter will show why new tools are needed to study modular Lie algebras.

In Chapter 6, we introduce modular Lie algebras. We define the \( p \)-mapping for modular Lie algebras. To define the \( p \)-mapping, we first construct a framework that makes sense in modular Lie theory. After this discussion, we define \( p \)-mappings and restricted Lie algebras. Moreover, examples of restricted Lie algebras will be introduced to develop intuition for this concept.

In Chapter 7, we will conclude the project with a general discussion of modular Lie algebras and present the insights gained from this investigation.

\chapter{Lie Algebras}
\chaptermark{Lie Algebras} 

The general assumption throughout this project is that all vector spaces over any field considered are finite-dimensional.

\section[Definition of Lie Algebras]{Definition of Lie Algebra}
A Lie algebra $L$ is a vector space over a (base) field $F$ equipped with a Lie multiplication $f: L \times L \rightarrow L$. The definition of Lie algebra is stated in  \ref{Liedef}.

\begin{Notation}
A \textit{$F$-vector space}\index{F-vector space} is a vector space over $F$.
\end{Notation}

\begin{Definition}
Let $U, V$ and $W$ be $F$-vector spaces.
A map $f: U\times V \rightarrow W$ is called \textit{bilinear map}\index{bilinear map} if $f(ax_1+bx_2, y)= af(x_1,y)+bf(x_2,y)$ and $f(x, ay_1+by_2)=af(x, y_1)+bf(x, y_2)$ for all $a, b \in F$, $x_1, x_2 \in U$ and $y_1, y_2 \in V$.

\end{Definition}

Our definition is equivalent to the definition given in \cite[Def 2.1.2]{EW}.

\begin{Definition}\label{Liedef}
A \textit{Lie algebra}\index{Lie algebra} is a $F$-vector space $L$ defined with a bilinear map $f: L\times L \rightarrow L$ satisfying the following properties:
\begin{equation} \label{L1}
f(x, x)=0 \;\;\; \text{for all } x \in L
\end{equation}
\begin{equation} \label{Jacid}
f(f(x, y), z) + f(f(y, z), x) + f(f(z, x), y) = 0\;\;\; \text{for all} \; x, y, z \in L.
\end{equation}

The equation \ref{Jacid} is called \textit{Jacobi Identity}\label{Jacobi identity}.

A bilinear $f$ satisfying \ref{L1} and \ref{Jacid} is called \textit{Lie multiplication}\label{Lie multiplication}. 

\end{Definition}

\begin{Remark}
A Lie algebra $L$ is a pair $(L, f)$ where $f$ is Lie multiplication.
\end{Remark}

\begin{Example}\label{matlie}

Consider the vector space \( \M(n,F) \) of \( n \times n \) matrices over a field \( F \). If we define a product 
\[
[\;,]: \M(n,F) \times \M(n,F) \rightarrow \M(n,F)
\]
as
\[
[X, Y] = XY - YX \quad \text{for all } X, Y \in \M(n,F),
\]
then this map is a Lie bracket.

First, by the linearity of matrix multiplication, this map is bilinear. Clearly, 
\[
[X, X] = X^2 - X^2 = 0 \quad \text{for all } X \in \M(n,F).
\]
For the Jacobi identity in \ref{Jacid}, we apply the defined product \( [\;,] \) on the left-hand side. Then, 
$$(XY - YX)Z-Z(XY - YX) + (YZ - ZY)X - X(YZ - ZY) + (ZX - XZ)Y - Y(ZX - XZ)$$
becomes
$$XYZ-YXZ-ZXY + ZYX + YZX - ZYX - XYZ + XZY+ ZXY - XZY - YZX + YXZ$$
$$ = 0.$$
This shows that the bilinear map \( [\;,] \) satisfies the Jacobi identity.

Therefore, \( \M(n,F) \) equipped with \( [\;,] \) is a Lie algebra.

\end{Example}

\begin{Definition}
 A \textit{general linear Lie algebra} $\mathfrak{gl}(n, F)$ is $\M(n, F)$ with Lie multiplication $[\;,]$ defined as above.

\end{Definition}

In fact, we can make any associative algebra $A$ over $F$ into a Lie algebra with a \textit{commutator} defined in \ref{comt}.

\begin{Notation}
An \textit{$F$-algebra}\index{F-algebra} is an algebra over $F$.
\end{Notation}

\begin{Definition}\label{comt}
Given any associative $F$-algebra $A$, a \textit{commutatior}\index{commutator} is the multiplication $[\;,]$ defined by $[X,Y]=XY-YX$ for all $X, Y \in A$.

\end{Definition}

\begin{Corollary}
A commutator of any given associative $F$-algebra is Lie multiplication.

\end{Corollary}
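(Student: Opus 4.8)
The plan is to observe that this Corollary is simply the algebra-level abstraction of Example~\ref{matlie}: the computation carried out there for $\M(n,F)$ used nothing about matrices beyond the facts that their multiplication is $F$-bilinear and associative, and every associative $F$-algebra has exactly those two features. So I would fix an associative $F$-algebra $A$ together with the commutator $[X,Y]=XY-YX$ from Definition~\ref{comt}, and check the three defining conditions of Lie multiplication from Definition~\ref{Liedef} in turn.

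First I would verify that $[\;,]$ is a bilinear map $A\times A\to A$. Since $A$ is an $F$-algebra, its product $A\times A\to A$ is $F$-bilinear, and subtraction is $F$-linear in each argument; composing these gives $[ax_1+bx_2,y]=a[x_1,y]+b[x_2,y]$ and $[x,ay_1+by_2]=a[x,y_1]+b[x,y_2]$ for all $a,b\in F$ and $x_i,y_j\in A$, so condition for a bilinear map holds. Next, property \ref{L1} is immediate: for any $x\in A$ we have $[x,x]=xx-xx=0$.

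The only remaining point is the Jacobi identity \ref{Jacid}. Here I would expand
\[
[[x,y],z]+[[y,z],x]+[[z,x],y]
\]
using only the distributive law and the associativity of the product in $A$, in exactly the same way as in Example~\ref{matlie}. This yields twelve signed terms, each a product of $x,y,z$ in some order; associativity lets us delete all parentheses, and one then reads off that the terms cancel in six pairs, giving $0$.

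I do not expect a genuine obstacle: the one thing worth stressing is that the cancellation uses associativity of the product in $A$ and \emph{not} commutativity (which $A$ need not have) — indeed it is precisely the possible non-commutativity of $A$ that makes $[\;,]$ a nontrivial bracket. Once that is noted the result follows, and it recovers Example~\ref{matlie} as the special case $A=\M(n,F)$.
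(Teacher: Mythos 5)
Your proposal is correct and matches the paper's own argument, which simply notes that the verification for a general associative $F$-algebra is identical to the computation already carried out for $\M(n,F)$ in Example~\ref{matlie}. You spell out the three checks (bilinearity, $[x,x]=0$, and the twelve-term cancellation for the Jacobi identity using only associativity) a bit more explicitly, but the route is the same.
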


\begin{proof}
Although the commutator is the generalised definition of Lie multiplication defined in \ref{matlie}, the proof is exactly the same as that given in Example~\ref{matlie}.
\end{proof}

\begin{Definition}\label{algLiealg}

For any associative $F$-algebra $A$, $A_L$ is a Lie algebra where the Lie multiplication of $A_L$ is the commutator.

\end{Definition}

\begin{Lemma}

Let $L$ be a Lie algebra with Lie multiplication $f$. Then $f$ is \textit{skew-symmetry} i.e., $f(y, z)=-f(z, y)$ for all $y, z \in L$.

\end{Lemma}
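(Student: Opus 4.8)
The plan is to derive skew-symmetry directly from property \ref{L1}, the condition $f(x,x)=0$ for all $x\in L$, together with bilinearity. The standard trick is polarization: apply the vanishing condition not to a single element but to the sum $y+z$.

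First I would take arbitrary $y,z\in L$ and consider $f(y+z,\,y+z)$. By \ref{L1} applied to the element $y+z\in L$, this equals $0$. Next I would expand using bilinearity in both slots, obtaining $f(y,y)+f(y,z)+f(z,y)+f(z,z)$. Then I would invoke \ref{L1} again, now applied to $y$ and to $z$ separately, to kill the two diagonal terms $f(y,y)$ and $f(z,z)$. What remains is $f(y,z)+f(z,y)=0$, which rearranges to $f(y,z)=-f(z,y)$, the desired identity. Since $y,z$ were arbitrary, this completes the proof.

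There is essentially no obstacle here: the only subtlety worth flagging is that this argument shows \ref{L1} implies skew-symmetry unconditionally, whereas the converse (skew-symmetry implies \ref{L1}) requires $\charr F\neq 2$, since from $f(x,x)=-f(x,x)$ one can only conclude $2f(x,x)=0$. This asymmetry is precisely the kind of characteristic-dependent phenomenon the project is about, so it may be worth a one-line remark after the proof, though strictly it is not needed for the statement as given. I would keep the proof itself to three or four lines and not belabor the bilinearity expansion.
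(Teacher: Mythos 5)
Your proposal is correct and uses exactly the same polarization argument as the paper: expand $f(y+z,y+z)=0$ by bilinearity and cancel the diagonal terms via \ref{L1}. The extra remark about the converse requiring $\charr F\neq 2$ is accurate and fits the theme of the project, but the proof itself matches the paper's.
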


\begin{proof}
By setting $x=y+z$,
$$f(x, x)= f(y+z, y+z)=f(y,y)+f(y,z)+f(z,y)+f(z,z).$$
By \ref{L1}, $f(x, x)=f(y, y)=f(z, z)=0$. This implies $0=f(z,y)+f(y, z)$ and the result follows.
\end{proof}

\begin{Notation}
Given a Lie algebra $L$ with its commutator, \ref{L1} and \ref{Jacid} can be written as
\begin{equation} \label{L1'}
[x, x]=0 \;\;\; \text{for all } x\in L
\end{equation}
\begin{equation} \label{L2'}
[[x,y],z]+[[y,z],x]+[[z,x],y] = 0 \;\;\; \text{for all} \; x, y, z \in L.
\end{equation}

\end{Notation}

Note that ideals\index{ideals} can be defined in any Lie algebra framework since Lie algebra is $F$-algebra.

\section{Representations of Lie Algebras}

Given a Lie algebra $L$, it is important to closely look at its representations.

\begin{Definition}

For $F$-algebra $V$ and $U$, \textit{$F$-algebra homomorphism} or simply \textit{$F$-homomorphism} is a map $\phi: V \rightarrow U$ that preserves vector space structures (i.e., addition and scalar multiplication) and multiplication.

\end{Definition}

\begin{Remark}
Given Lie algebras $A$ and $B$, any $k$-algebra homomorphism from $V$ into $U$ preserves Lie multiplication.
\end{Remark}

\begin{Definition}

Let $V$ be a $F$-algebra. An \textit{endomorphism}\index{endomorphism} is $F$-algebra homomorphism from $V$ into itself. \end{Definition}

\begin{Lemma}
The set of $F$-endomorphisms of $V$ forms $F$-algebra with multiplication as map composition.
\end{Lemma}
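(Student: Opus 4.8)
The plan is to exhibit the three operations on the set of $F$-endomorphisms of $V$ explicitly and then verify the axioms of an associative $F$-algebra one family at a time. Write $\End(V)$ for this set. For $\phi,\psi\in\End(V)$ and $a\in F$ define the pointwise sum $(\phi+\psi)(v)=\phi(v)+\psi(v)$, the pointwise scalar multiple $(a\phi)(v)=a\,\phi(v)$, and the product $\phi\psi=\phi\circ\psi$ (map composition). The first task is closure: each of $\phi+\psi$, $a\phi$ and $\phi\psi$ must again lie in $\End(V)$. For $\phi\psi$ this is immediate, since a composite of maps preserving addition and scalar multiplication again preserves them; for $\phi+\psi$ and $a\phi$ one uses that the addition of $V$ is commutative and that scalar multiplication in $V$ is $F$-bilinear.

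Next I would check the vector-space axioms for $(\End(V),+,\cdot)$: these are all inherited pointwise from the corresponding axioms in $V$, with the zero map as additive identity and $v\mapsto -\phi(v)$ as the additive inverse of $\phi$. For the multiplicative structure, associativity $(\phi\psi)\chi=\phi(\psi\chi)$ is just associativity of function composition, which always holds. It then remains to verify the two distributive laws $(\phi_1+\phi_2)\psi=\phi_1\psi+\phi_2\psi$ and $\phi(\psi_1+\psi_2)=\phi\psi_1+\phi\psi_2$, together with $a(\phi\psi)=(a\phi)\psi=\phi(a\psi)$. The left distributive law and the scalar identities are purely formal (evaluate both sides at $v$ and compare). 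The right distributive law is the one place where the hypothesis that each $\phi$ preserves addition genuinely enters: evaluating at $v$ gives $\phi\big(\psi_1(v)+\psi_2(v)\big)$ on one side and $\phi(\psi_1(v))+\phi(\psi_2(v))$ on the other, and these agree precisely because $\phi$ is additive. Combining these observations yields that $(\End(V),+,\circ)$ is an associative $F$-algebra.

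The main point worth flagging, rather than a real obstacle, is the closure step for the sum. If ``$F$-endomorphism'' is read in the strict sense of the preceding definition (a self-map that also preserves the $F$-algebra multiplication of $V$), then $\phi+\psi$ need not preserve that product, because $(\phi+\psi)(vw)=\phi(vw)+\psi(vw)$ while $(\phi+\psi)(v)\,(\phi+\psi)(w)$ acquires the cross terms $\phi(v)\psi(w)+\psi(v)\phi(w)$. Hence the notion relevant to this statement — and the one needed to build $\gl(V)=\End(V)_L$ and the representations discussed in the next section — is that of $F$-linear self-maps of the underlying vector space of $V$; with that reading the argument above goes through with only routine verifications, and I expect no further difficulty.
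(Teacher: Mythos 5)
Your proof is correct, and it is worth noting that the paper itself offers no argument here at all: its ``proof'' of this lemma is a single citation to \cite[Section 7]{EW}. So your complete axiom-by-axiom verification is strictly more than the source provides, and it follows the standard route one would expect (pointwise addition and scalar multiplication, composition as product, closure, inherited vector-space axioms, associativity of composition, the two distributive laws, and the scalar compatibilities). The one step you rightly single out --- that the right distributive law $\phi(\psi_1+\psi_2)=\phi\psi_1+\phi\psi_2$ is exactly where additivity of $\phi$ is used --- is the only non-formal point, and you handle it correctly. Your closing remark is also a genuine and valuable catch rather than a quibble: the paper's preceding definition literally declares an endomorphism of an $F$-algebra $V$ to be an $F$-algebra homomorphism $V\to V$, i.e.\ multiplication-preserving, and under that strict reading the lemma as stated is false, since the sum of two multiplication-preserving maps picks up the cross terms $\phi(v)\psi(w)+\psi(v)\phi(w)$ and so need not preserve the product. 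The intended (and needed) reading, as you say, is the algebra of $F$-linear self-maps of the underlying vector space of $V$; that is what is required for $\gl(V)=\End(V)_L$ and the representation theory that follows, and with that reading your argument goes through without difficulty.
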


\begin{proof}
See \cite[Section 7]{EW}
\end{proof}

\begin{Definition}
Let $V$ be $F$-algebra. The algebra of $F$-endomorphisms of $V$ is denoted by $\End_F(V)$ or simply $\End(V)$.

\end{Definition}
Recall that by \ref{algLiealg}, given any associative \( F \)-algebra \( A \), a Lie algebra \( A_L \) is defined with Lie multiplication \([X, Y] = XY - YX\) for all \( X, Y \in A \). Since \( \End(V) \) is an associative \( F \)-algebra, it gives rise to a Lie algebra \( \End(V)_L \).
\begin{Definition}
Let \( V \) be an \( F \)-vector space. A \textit{general linear Lie algebra}\index{general linear Lie algebra} \( \mathfrak{gl}(V) \) is the Lie algebra \( \End(V)_L \). The corresponding Lie multiplication in \( \mathfrak{gl}(V) \) is given by \([f, g] = f \circ g - g \circ f\) for all \( f, g \in \mathfrak{gl}(V) \), where \( \circ \) denotes composition of maps. Subalgebras of \( \mathfrak{gl}(V) \) are called \textit{linear Lie algebras}\index{linear Lie algebras}.
\end{Definition}

\begin{Remark}

\item[(i)]\label{End(iv)}
From linear algebra, when \( V \) is finite-dimensional over \( F \), the Lie algebra \( \End(V)_L \) is isomorphic to \( \mathfrak{gl}(n, F) \), where \( \dim V = n \).
\item[(ii)]
Any Lie algebra \( L \) is an \( F \)-vector space. Thus, \( \mathfrak{gl}(V) \), where \( V \) is a Lie algebra \( L \), is \( \mathfrak{gl}(L) = \End(L)_L \).
\end{Remark}

\begin{Definition}
Let \( L \) be a Lie algebra and \( V \) a \( F \)-vector space. A \textit{representation}\index{representation} of \( L \) in \( V \) is an \( F \)-homomorphism \( \rho: L \rightarrow \mathfrak{gl}(V) \).

\end{Definition}

\begin{Remark}

Alternatively, $\rho: L \rightarrow \gl(V)$ is a \textit{representation} of $L$
if $\rho([X,Y])=[\rho(X),\rho(Y)]$.

\end{Remark}

\begin{Remark}
If \( V \) is a finite-dimensional vector space, then \( \gl(V) \) is isomorphic to \( \gl(n, F) \), where \( n = \dim V \). Hence, there exists a map \( \rho' \colon L \to \gl(n, F) \) such that \( \rho'([x, y]) = [\rho'(x), \rho'(y)] \). Sometimes, \( \rho' \) is alternatively considered a representation when \( V \) is finite-dimensional.

\end{Remark}

An important example of a representation of \( L \) is the \textit{adjoint representation}.

\begin{Definition}\label{adDef}

Let $L$ be a Lie algebra.

\item[(i)] Let $x\in L$. The $F$-homomorphism $\ad\;x: L \rightarrow L$ defined by $(\ad x)(a)=[x,a]$ is called the \textit{adjoint representation}\index{adjoint representation} of $x$.

\item[(ii)] The mapping $\ad: L \rightarrow \End(L)$ defined by $\ad(x)= \ad x$ is called the \textit{adjoint representation}\index{adjoint representation} of $L$.

\end{Definition}

\begin{Remark}
The Jacobi identity in \ref{L2'} can be expressed using the adjoint representation \( \ad \). The Jacobi identity is given by
\[
(\ad z \circ \ad x)(y) + (\ad x \circ \ad y)(z) + (\ad y \circ \ad z)(x) = 0.
\]
\end{Remark}

\begin{Definition}

The Lie algebra $L$ is \textit{nilpotent}\index{nilpotent} if there exists a positive integer $m$ such that $L^m=0$. i.e., $l_1l_2\cdots l_m=0$ for any $l_1,l_2,\cdots, l_m \in L$.

\end{Definition}

\section{Solvable and Semisimple Lie Algebras}

\begin{Definition}

A non-abelian Lie algebra $L$ is said to be \textit{simple}\index{simple} if $L$ does not contain non-trivial ideals. i.e., $\{0\}$ and $L$ are the only ideals of $L$.

\end{Definition}

In Lie theory, solvable Lie algebras are defined alongside derived Lie algebras. The definition of a semi-simple Lie algebra follows after the definition of solvable Lie algebras.

\begin{Remark}
Let $L$ be a Lie algebra. The set $L^{(2)}=[L,L]:=\; < [l_1,l_2] \in L \;|\; l_1, l_2 \in L >_F$ forms Lie algebra. $L^{(2)}$ is Lie subalgebra of $L$.
\end{Remark}

\begin{Definition}
Let $L$ be a Lie algebra.
\item[1.] $L^{(2)}$ is called the \textit{derived}\index{derived} Lie subalgebra\index{derived Lie algebra} of $L$.
\item[2.] The sequence of Lie algebras $(L^{(n)})_{n\in \mathbb{N}}$ where $L^{(1)} := L$ and $L^{(n+1)} := [L^{(n)},L^{(n)}]$ for any $n\in \mathbb{N}$ is called the \textit{derived series}\index{derived series} of $L$.
\item[3.] $L$ is \textit{solvable}\index{solvable} if there exists $n\in \mathbb{N}$ such that $L^{(n)}=0$.

In other words, a Lie algebra \( L \) is solvable if its derived series eventually becomes stationary and terminates at \( 0 \).

\end{Definition}

In Lie theory, for any finite-dimensional Lie algebra \( L \), there exists a unique solvable ideal of \( L \) that contains every solvable ideal of \( L \).

\begin{Lemma}\label{solvablelem}

\item[(i)] Subalgebras and homomorphic images of solvable Lie algebras $L$ are solvable.

\item[(ii)] If a Lie algebra $L$ has an ideal $I$ such that $I$ and $L/I$ are solvable, then $L$ is also solvable.

\item[(iii)] The sum $I+J$ of two solvable ideals $I, J$ of a Lie algebra $L$ is solvable ideal of $L$.

\end{Lemma}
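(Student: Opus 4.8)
The plan is to prove the three parts in order, since each later part leans on the earlier ones. For (i), I would work directly with the derived series. If $M \subseteq L$ is a subalgebra, then a straightforward induction on $n$ shows $M^{(n)} \subseteq L^{(n)}$ for all $n$, because $[M^{(n)}, M^{(n)}] \subseteq [L^{(n)}, L^{(n)}]$; hence if $L^{(n)} = 0$ then $M^{(n)} = 0$. For a homomorphic image, let $\phi \colon L \to L'$ be a surjective homomorphism of Lie algebras; here I would induct to show $\phi(L^{(n)}) = (L')^{(n)}$, using that $\phi$ preserves the bracket, so $\phi([L^{(n)},L^{(n)}]) = [\phi(L^{(n)}),\phi(L^{(n)})]$, and surjectivity at the base case. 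Then $L^{(n)} = 0$ forces $(L')^{(n)} = 0$.

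For (ii), suppose $I \trianglelefteq L$ with $I$ and $L/I$ both solvable, say $(L/I)^{(n)} = 0$ and $I^{(m)} = 0$. The quotient map $\pi \colon L \to L/I$ satisfies $\pi(L^{(n)}) = (L/I)^{(n)} = 0$ by the computation from part (i), so $L^{(n)} \subseteq \ker \pi = I$. Now apply part (i) to the subalgebra $L^{(n)} \subseteq I$: since $I$ is solvable, $L^{(n)}$ is solvable, so $(L^{(n)})^{(m)} = 0$ for some $m$. But $(L^{(n)})^{(k)} = L^{(n+k-1)}$ by unwinding the definition of the derived series, so $L^{(n+m-1)} = 0$ and $L$ is solvable.

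For (iii), I would use the second-isomorphism-style identity $(I+J)/J \cong I/(I \cap J)$. First, $I + J$ is an ideal of $L$ because $I$ and $J$ are (the bracket of $L$ with $I+J$ lands in $I + J$). Next, $(I+J)/J$ is a homomorphic image of $I$ under the composite $I \hookrightarrow I+J \twoheadrightarrow (I+J)/J$, so it is solvable by part (i). Also $J$ is solvable by hypothesis. Applying part (ii) to the ideal $J \trianglelefteq I+J$ with $J$ and $(I+J)/J$ solvable yields that $I+J$ is solvable.

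The main obstacle is mostly bookkeeping: getting the index shift $(L^{(n)})^{(k)} = L^{(n+k-1)}$ right (the paper's indexing starts the derived series at $L^{(1)} = L$, which is slightly unusual and makes the arithmetic of composing derived series a place to be careful), and confirming that $I + J$ and $(I+J)/J$ genuinely fit the hypotheses of parts (i) and (ii). None of these steps is deep, but the proof of (ii) does genuinely need the fact that a subalgebra of a solvable Lie algebra is solvable — i.e., part (i) — so the order of the argument matters.
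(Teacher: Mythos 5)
Your proof is correct, and the index bookkeeping $(L^{(n)})^{(k)} = L^{(n+k-1)}$ is handled properly for the paper's convention $L^{(1)} = L$. The paper itself gives no proof here, deferring entirely to the cited reference (Strade--Farnsteiner, Lemma 1.7.2); your argument is the standard derived-series proof found there, with the three parts in the same logical order (iii) depending on (i) and (ii).
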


\begin{proof}

For the proof of this, see \cite[Lemma 1.7.2]{SF}.
\end{proof}

\begin{Corollary}

Let $L$ be a finite-dimensional Lie algebra. Then there exists a unique solvable ideal of $L$ containing every solvable ideal of $L$. 
\end{Corollary}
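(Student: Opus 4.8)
The plan is to produce the desired ideal as a solvable ideal of maximal dimension, and then to read off the universal and uniqueness properties from that maximality together with Lemma~\ref{solvablelem}. First I would observe that the collection $\mathcal{S}$ of solvable ideals of $L$ is nonempty, since $\{0\}$ is a solvable ideal. Because $L$ is finite-dimensional, the dimensions of the members of $\mathcal{S}$ are bounded above by $\dim L$, so I may choose some $R \in \mathcal{S}$ whose dimension is maximal among all solvable ideals of $L$. This $R$ is the candidate (it is what is usually called the \emph{radical} of $L$).

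Next I would show that $R$ contains every solvable ideal. Let $I$ be an arbitrary solvable ideal of $L$. By Lemma~\ref{solvablelem}(iii), the sum $R + I$ is again a solvable ideal of $L$, and clearly $R \subseteq R + I$, so $\dim R \le \dim(R + I)$. Maximality of $\dim R$ forces $\dim(R+I) \le \dim R$, hence $\dim(R+I) = \dim R$; combined with $R \subseteq R+I$ this yields $R + I = R$, so $I \subseteq R$, as required.

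Finally I would verify uniqueness. Suppose $R'$ is another solvable ideal of $L$ that contains every solvable ideal of $L$. Applying the containment property of $R$ to the solvable ideal $R'$ gives $R' \subseteq R$, and applying the containment property of $R'$ to the solvable ideal $R$ gives $R \subseteq R'$; therefore $R = R'$.

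I do not expect a genuine obstacle here. The only two places where the hypotheses do real work are the use of finite-dimensionality to guarantee that a solvable ideal of maximal dimension exists, and the use of Lemma~\ref{solvablelem}(iii) to ensure that the sum of two solvable ideals is again a solvable ideal; everything else is a short dimension count and a symmetry argument. If one wished to avoid invoking part (iii) directly, one could instead cite part (ii) applied to $R \trianglelefteq R + I$ together with the isomorphism $(R+I)/R \cong I/(I\cap R)$, but the argument via part (iii) is the cleanest.
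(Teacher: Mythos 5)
Your proposal is correct and follows essentially the same argument as the paper: choose a solvable ideal $R$ of maximal dimension, use Lemma~\ref{solvablelem}(iii) to show $R+I$ is solvable, and conclude $I \subseteq R$ by a dimension count. Your explicit uniqueness check is a small addition the paper leaves implicit, but the core approach is identical.
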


\begin{proof}
Let \( R \) be a solvable ideal of largest possible dimension. Assume that \( I \) is any solvable ideal of \( L \). Then, by Lemma \ref{solvablelem} (iii), \( R + I \) is a solvable ideal. Note that \( R \subset R + I \) implies \( \dim R \leq \dim(R + I) \). By the maximality of \( R \) in dimension, \( \dim R = \dim(R + I) \), and hence \( R = (R + I) \). Therefore, \( I \) is contained in \( R \).
\end{proof}

The above proof refers to \cite[Corollary 4.5]{EW}

\begin{Definition}
The maximal solvable ideal of \( L \) is called the \textit{radical}\index{radical} of \( L \) and denoted by \( \rad(L) \).
\end{Definition}

\begin{Definition}
A Lie algebra $L$ is \textit{seimisimple}\index{semisimple} if $\rad(L)=0$.

\end{Definition}

\section{The Killing Form}

\begin{Definition}
Let $L$ be a Lie algebra. The \textit{killing form}\index{killing form} of $L$ is a symmetric bilinear form on $L$ defined by $\kappa(x, y) = \tr(\ad x \circ \; \ad y)$ for all $x, y\in L$. 
\end{Definition}

\begin{Lemma}
$\kappa$ is associative, i.e., $\kappa([x, y], z)=\kappa(x, [y,z])$ for all $x, y, z \in L$. 
\end{Lemma}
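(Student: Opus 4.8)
The plan is to prove the associativity identity $\kappa([x,y],z) = \kappa(x,[y,z])$ by reducing everything to a statement about the trace of compositions of endomorphisms, using the fact that $\ad$ is a representation. First I would use the Jacobi identity, rewritten via the adjoint representation, to express $\ad[x,y]$ in terms of $\ad x$ and $\ad y$: namely $\ad[x,y] = \ad x \circ \ad y - \ad y \circ \ad x$, which is precisely the statement that $\ad \colon L \to \mathfrak{gl}(L)$ is a Lie algebra homomorphism (see the Remark following Definition~\ref{adDef}). With this in hand, I would expand both sides of the desired identity.

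Concretely, writing $a = \ad x$, $b = \ad y$, $c = \ad z$ (all elements of the associative algebra $\End(L)$), the left-hand side becomes
\[
\kappa([x,y],z) = \tr\big(\ad[x,y] \circ \ad z\big) = \tr\big((ab - ba)c\big) = \tr(abc) - \tr(bac),
\]
while the right-hand side becomes
\[
\kappa(x,[y,z]) = \tr\big(\ad x \circ \ad[y,z]\big) = \tr\big(a(bc - cb)\big) = \tr(abc) - \tr(acb).
\]
So the identity reduces to the claim $\tr(bac) = \tr(acb)$, which is an immediate consequence of the cyclic invariance of the trace, $\tr(PQ) = \tr(QP)$, applied with $P = ba$ and $Q = c$ (or equivalently $P = a$, $Q = cb$ after one more cyclic shift). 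I would state cyclic invariance of the trace as a standard fact from linear algebra, or cite it, since it is not proved earlier in the excerpt.

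The only genuinely non-routine point — and so the ``main obstacle,'' though it is mild — is making sure the identification $\ad[x,y] = \ad x \circ \ad y - \ad y \circ \ad x$ is correctly justified from the Jacobi identity, being careful about the order of the bracket arguments and signs. Concretely, for $w \in L$ one checks $(\ad[x,y])(w) = [[x,y],w]$, and the Jacobi identity $[[x,y],w] = [x,[y,w]] - [y,[x,w]] = (\ad x \circ \ad y)(w) - (\ad y \circ \ad x)(w)$; this is exactly the reformulation of \eqref{L2'} already recorded in the Remark after Definition~\ref{adDef}. Once that is pinned down, the rest is the two short trace computations above, so I would present the proof in roughly four lines: invoke that $\ad$ is a homomorphism, expand each side, and cancel using cyclicity of the trace.
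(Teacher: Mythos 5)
Your proposal is correct and follows essentially the same route as the paper's own proof: both use the homomorphism property $\ad[x,y] = \ad x \circ \ad y - \ad y \circ \ad x$ (coming from the Jacobi identity), expand each side of the claimed identity into traces of compositions, and cancel the mismatched term via cyclic invariance of the trace. Your version is slightly more explicit about justifying the homomorphism identity from \eqref{L2'}, which is a welcome addition, but the argument is the same.
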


\begin{proof}
For any $x, y, z \in L$,
$$\kappa([x,y], z)=\tr(\ad \; [x,y]\; \circ\; \ad z)$$
$$\qquad \qquad \qquad \qquad \qquad \qquad \qquad \qquad\; =\tr((\ad x\; \circ\; \ad y) \; \circ\; \ad z)-\tr((\ad y\; \circ\; \ad x)\; \circ\; \ad z)$$
$$\qquad \qquad \qquad \qquad \qquad \qquad \qquad \qquad\;=\tr(\ad x\; \circ\; (\ad y\; \circ\; \ad z))-\tr(\ad x\; \circ\; (\ad z\; \circ\; \ad y))$$
$$\qquad \quad \;\;\; =\tr(\ad x, \ad \; [y,z])$$
$$\quad =\kappa(x,[y,z])$$.
\end{proof}

The lemma refers to \cite[Definition 2.1]{LN7}.

\begin{Definition}

The Killing form $\kappa$ of a Lie algebra $L$ is said to be \textit{non-degenerate}\index{non-degenerate} if 
\[
\operatorname{rad}(\kappa) := \{x \in L \mid \kappa(x, y) = 0 \text{ for all } y \in L\} = \{0\}.
\]

\end{Definition}

\chapter{Base fields for Lie Algebras}

\section{Algebraically Closed Fields}\index{algebraically closed field}

If a field \( F \) is algebraically closed, then it is possible to solve any polynomial equation over \( F \). In other words, any polynomial over \( F \) can be factorized into linear polynomials over \( F \). For any field \( K \), there exists an algebraic closure \(\overline{K}\). Note that \(\overline{K}\) has the same characteristic as \( K \). For example, \(\overline{\mathbb{F}_p}\) is an algebraic closure of \(\mathbb{F}_p\), and both have characteristic \( p \).

\subsection{Algebraically Closed Fields in Lie Theory}\label{ACFLT}

In Lie theory, many theorems assume that the base field \( F \) is algebraically closed in order to guarantee the existence of eigenvalues\index{eigenvalues} for the map \( \text{ad}\,x \) for any \( x \in L \) in \( F \). Without assuming that \( F \) is algebraically closed, we cannot necessarily conclude that \( F \) contains the eigenvalues of \( \text{ad}\,x \); however, this holds true when the field is algebraically closed. Therefore, all theorems in Chapter~\ref{Chap3} require \( F \) to be algebraically closed to ensure that \( F \) contains all the necessary eigenvalues.

In linear algebra, it is known that:

$f \in \End(V)$ can be represented by a triangular matrix \textit{if and only if} all eigenvalues of $f$ are contained in $F$.

This can be simply explained as follows. Every upper triangular matrix has eigenvalues on diagonal. So any eigenvalue of the upper triangular matrix should be in $F$. On the other hand, if all eigenvalues of $f \in \End(V)$ are contained in $F$, then there exists a change of basis matrix $P$ that converts the corresponding matrix of $f$ into an upper triangular matrix.

\( f \in \End(V) \) is triangularizable if \( F \) is algebraically closed; otherwise, some elements in a solvable Lie algebra may not be representable as upper triangular matrices.

Using this insight, Lie's theorem over the complex field can be interpreted as follows:

\begin{Example}
Let \( L \subset \gl(V) \) be a solvable Lie algebra over \( \mathbb{C} \). Then Lie's theorem implies that there is a basis of \( V \) such that all elements of \( L \) are represented by upper triangular matrices.
\end{Example}

In Section~\ref{Lie's}, we will discuss the theorem more deeply in terms of the base field. For example, the assumption of the complex field \( \mathbb{C} \) in the above example is replaced with that of an algebraically closed field.

\chapter{Special Linear Lie Algebras and $\fsl(2, F)$}

Special linear Lie algebras \( \mathfrak{sl}(2, F) \) are well-studied in Lie theory. They are crucial examples; in particular, \( \mathfrak{sl}(2, \mathbb{C}) \) is an especially important topic. This is because special linear Lie algebras are among the classified classes of Lie algebras. In this chapter, the definition of special linear Lie algebras will be provided, and a new linear Lie subalgebra \( \fsl(2,F) \) over a field \( F \) of characteristic \( 2 \) will be introduced.

\section{Special Linear Lie Algebras}\label{slla}
In this section, $\mathfrak{sl}(2, F)$ over arbitrary field $F$ will be defined.

\subsection{Special Linear Lie Algebras and $\mathfrak{sl}(2,F)$}

\begin{Definition}

The linear Lie subalgebra $\mathfrak{sl}(n, F)$ of $\gl(n, F)$ consisting of all matrices of trace $0$ is called \textit{special linear Lie algebra}\index{special linear Lie algebra}.

\end{Definition}

\begin{Lemma}
$\mathfrak{sl}(n,F)$ is a Lie subalgebra of $\gl(n,F)$.
\end{Lemma}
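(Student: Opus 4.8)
The plan is to verify the two defining conditions of a Lie subalgebra for $\mathfrak{sl}(n,F)$ sitting inside $\gl(n,F)$: first, that it is an $F$-subspace, and second, that it is closed under the bracket $[X,Y]=XY-YX$.

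For the subspace condition I would note that the trace map $\tr\colon \gl(n,F)\to F$ is $F$-linear, since $\tr(aX+bY)=a\,\tr X+b\,\tr Y$ for all $a,b\in F$ and $X,Y\in\M(n,F)$; this is immediate from the description of $\tr$ as the sum of the diagonal entries. Hence $\mathfrak{sl}(n,F)=\ker\tr$ is an $F$-subspace of $\gl(n,F)$ (and, being a kernel, it is nonempty, containing $0$). For closure under the bracket, the key point is the identity $\tr(XY)=\tr(YX)$ for all $X,Y\in\M(n,F)$, which follows from a direct index computation: $\tr(XY)=\sum_i\sum_j X_{ij}Y_{ji}=\sum_j\sum_i Y_{ji}X_{ij}=\tr(YX)$, where we have only reordered a finite sum. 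Consequently $\tr([X,Y])=\tr(XY)-\tr(YX)=0$, so $[X,Y]\in\mathfrak{sl}(n,F)$. I would remark that this in fact proves the slightly stronger statement that $[X,Y]\in\mathfrak{sl}(n,F)$ for \emph{any} $X,Y\in\gl(n,F)$, which is exactly the statement that $[\gl(n,F),\gl(n,F)]\subseteq\mathfrak{sl}(n,F)$; closure of $\mathfrak{sl}(n,F)$ under the bracket is then the special case $X,Y\in\mathfrak{sl}(n,F)$.

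There is no genuine obstacle here: the only ingredient that is not a one-line triviality is the cyclicity identity $\tr(XY)=\tr(YX)$, and even that holds over an arbitrary commutative ring, so in particular over $F$ regardless of $\charr F$. Combining the two verifications, $\mathfrak{sl}(n,F)$ is an $F$-subspace of $\gl(n,F)$ closed under its Lie multiplication, hence a Lie subalgebra, which is what was to be shown.
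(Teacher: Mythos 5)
Your proposal is correct and follows essentially the same route as the paper: check the subspace condition (which the paper leaves as "easy") and then use $\tr(XY)=\tr(YX)$ to get $\tr([X,Y])=0$, hence closure under the bracket. The extra details you supply — the kernel-of-$\tr$ description and the index computation for $\tr(XY)=\tr(YX)$ — just fill in steps the paper takes for granted.
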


\begin{proof}
It is easy to check that $\mathfrak{sl}(n,F)$ is closed as a vector subspace.
Hence, it is sufficient to show $\mathfrak{sl}(n,F)$ is closed under Lie multiplication. Since $\tr(AB) = \tr(BA)$ for any $A, B \in \M(n, F)$,
$$\tr([X,Y]) = \tr(XY - YX) = \tr(XY) - \tr(YX) = 0 \quad \forall X, Y \in \mathfrak{sl}(n,F).$$
This implies $[X,Y] \in \mathfrak{sl}(n,F)$.
\end{proof}

\begin{Remark}\label{calsl}
Let $F$ be a field. Then $\mathfrak{sl}(2, F)$ has basis $\{E = e_{12}, F = e_{21}, H = e_{11} - e_{22}\}$. Note that 
$$[E,F] = H, \quad [H,E] = 2E, \quad [H,F] = -2F.$$

This implies that $\mathfrak{sl}(2, F)$ is generated by $E$, $F$, and $H$, and is closed under Lie multiplication, so $\mathfrak{sl}(2, F)$ is a Lie subalgebra of $\gl(2, F)$.

\end{Remark}

\begin{Lemma}

$\mathfrak{sl}(2, \mathbb{C})$ is semisimple.

\end{Lemma}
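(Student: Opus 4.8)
The plan is to show that the Killing form of $\mathfrak{sl}(2,\mathbb{C})$ is non-degenerate, and then invoke the standard fact (Cartan's criterion for semisimplicity) that a Lie algebra with non-degenerate Killing form has zero radical. Since the excerpt has defined $\mathrm{rad}(\kappa)$ and non-degeneracy, and since $\mathrm{rad}(\kappa)$ is an ideal of $L$ (this follows from associativity of $\kappa$, which is proved in the excerpt), the cleanest route is: first compute $\kappa$ explicitly on the basis $\{E,F,H\}$ from Remark~\ref{calsl}, then observe the resulting Gram matrix is invertible over $\mathbb{C}$, and conclude $\mathrm{rad}(\kappa)=0$, hence $\mathrm{rad}(\mathfrak{sl}(2,\mathbb{C}))=0$.

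First I would compute the matrices of $\ad E$, $\ad F$, $\ad H$ in the ordered basis $(E,F,H)$ using the bracket relations $[E,F]=H$, $[H,E]=2E$, $[H,F]=-2F$ (and skew-symmetry). For instance $\ad H$ is diagonal with entries $(2,-2,0)$, so $\kappa(H,H)=\mathrm{tr}((\ad H)^2)=4+4+0=8$; similarly one gets $\kappa(E,F)=\kappa(F,E)=4$, $\kappa(E,E)=\kappa(F,F)=\kappa(E,H)=\kappa(F,H)=0$. This gives the Gram matrix
\[
\begin{pmatrix} 0 & 4 & 0 \\ 4 & 0 & 0 \\ 0 & 0 & 8 \end{pmatrix},
\]
whose determinant is $-128 \neq 0$ in $\mathbb{C}$. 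Hence $\kappa$ is non-degenerate.

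Next I would argue that non-degeneracy of $\kappa$ forces $\mathrm{rad}(\mathfrak{sl}(2,\mathbb{C}))=0$. The key point is that $\mathrm{rad}(\kappa)$ is a solvable ideal: it is an ideal because $\kappa$ is associative (if $x\in\mathrm{rad}(\kappa)$ then $\kappa([x,y],z)=\kappa(x,[y,z])=0$ for all $y,z$), and it is solvable by Cartan's criterion for solvability applied to the adjoint representation (the Killing form restricted to $\mathrm{rad}(\kappa)$ vanishes identically). Therefore $\mathrm{rad}(\kappa)\subseteq\mathrm{rad}(\mathfrak{sl}(2,\mathbb{C}))$; conversely, for any solvable ideal $I$, Cartan's criterion gives $\kappa(I,\mathfrak{sl}(2,\mathbb{C}))=0$, so $I\subseteq\mathrm{rad}(\kappa)$. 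Since $\mathrm{rad}(\kappa)=0$ by the determinant computation, we conclude $\mathrm{rad}(\mathfrak{sl}(2,\mathbb{C}))=0$, i.e.\ $\mathfrak{sl}(2,\mathbb{C})$ is semisimple.

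The main obstacle is the invocation of Cartan's criterion linking $\mathrm{rad}(\kappa)$ to $\mathrm{rad}(L)$, since it is not proved in the excerpt; I would either cite it from a standard reference (e.g.\ \cite{EW} or \cite{SF}) or, alternatively, give a self-contained argument special to this algebra. The self-contained route is in fact short here: $\mathfrak{sl}(2,\mathbb{C})$ has dimension $3$, and one checks directly that it has no nonzero proper ideals at all (any ideal is $\ad H$-invariant, hence a sum of eigenspaces $\mathbb{C}E,\mathbb{C}F,\mathbb{C}H$, and the bracket relations show that any such nonzero subspace generates all of $\mathfrak{sl}(2,\mathbb{C})$), so it is simple and therefore semisimple by definition — which sidesteps the Killing form entirely. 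Either presentation works; I would likely lead with the simplicity argument for self-containedness and remark that the Killing form computation above independently confirms it.
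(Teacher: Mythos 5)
Your proposal is correct, but your lead argument takes a genuinely different route from the paper. The paper proves the lemma by direct simplicity: it assumes a non-trivial ideal $I$, argues that $I$ must contain one of the generators $e$, $f$, $h$, and shows via the bracket relations that any such containment forces $I = \mathfrak{sl}(2,\mathbb{C})$. Your main argument instead computes the Gram matrix of the Killing form on the basis $(E,F,H)$ (your values $\kappa(E,F)=4$, $\kappa(H,H)=8$, all others zero, and the determinant $-128$ are correct) and then invokes Cartan's criterion for semisimplicity. That route is more systematic and generalizes to higher rank, but it leans on Theorem~\ref{CCS}, which the paper only states and proves a chapter later, so as written it would be a forward reference; your honest flagging of this and your offer to cite it externally is appropriate. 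Your fallback argument --- that any ideal is $\ad H$-invariant, hence a sum of the eigenspaces $\mathbb{C}E$, $\mathbb{C}F$, $\mathbb{C}H$ since the eigenvalues $2,-2,0$ are distinct --- is essentially the paper's proof, but actually improves on it: the paper asserts without justification that ``$I$ must contain one of $e$, $f$, or $h$,'' whereas a general element of an ideal is a linear combination, and your eigenspace decomposition is exactly the missing justification. One small point common to both your write-up and the paper's: simplicity gives semisimplicity not literally ``by definition'' but via the one-line observation that $\rad(L)$, being an ideal, is $0$ or $L$, and $L$ is not solvable since $[L,L]=L$; it is worth saying this explicitly.
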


\begin{proof}

Suppose that the Lie algebra \( \mathfrak{sl}(2, \mathbb{C}) \) has a non-trivial ideal \( I \), that is, \( I \neq \{0\} \) and \( I \neq \mathfrak{sl}(2, \mathbb{C}) \). Let \( e \in I \). Then 
$$h = [e, f] = ef - fe \in I,$$
 and thus
 $$ -2f = [h, f] = hf - fh \in I. $$ This implies that \( I \) contains all generators, so \( I = \mathfrak{sl}(2, \mathbb{C}) \), which is a contradiction. Similarly, if \( f \in I \), then $$h = [e, f] \in I, $$ and thus $$ 2e = [h, e] = he - eh \in I. $$ Similar to the case when \( e \in I \), this leads to a contradiction. Finally, if \( h \in I \), then $$ 2e = [h, e] \quad\text{ and }  -2f = [h, f] $$ are also in \( I \), again leading to a contradiction. Therefore, since \( I \) must contain one of \( e \), \( f \), or \( h \) and any such inclusion leads to \( I = \mathfrak{sl}(2, \mathbb{C}) \), no non-trivial proper ideal exists in \( \mathfrak{sl}(2, \mathbb{C}) \).
\end{proof}

\section{$\fsl(2,F)$}

In the previous section \ref{slla}, the base field in \( \mathfrak{sl}(2, F) \) was assumed to be any field. However, if \( p = 2 \), then the Lie brackets \( [H, E] \) and \( [H, F] \) both become zero. This raises the question of how the structure of \( \mathfrak{sl}(2, F) \) changes when the characteristic of \( F \) is exactly \( 2 \).

In this section, we introduce a new simple Lie algebra, denoted by \( \fsl(2, F) \), of dimension 3, specifically defined for fields \( F \) of characteristic \( 2 \).

Assume throughout this section that the base field \( F \) has characteristic \( p = 2 \).

\subsection{Definition and Properties of $\fsl(2,F)$}\label{defpropfsl}

\begin{Definition}
The linear Lie subalgebra $\fsl(2, F)$ or $\fsl_2$ of $\gl(2, F)$ generated by $e$, $f$, and $h$ satisfying 
$$[h,e] = e, \quad [h,f] = -f, \quad [e,f] = h$$
is called the \textit{fake special linear Lie algebra}\index{fake special linear Lie algebra}.

\end{Definition}

\begin{Lemma}
$\fsl(2,F)$ is a simple Lie algebra.

\end{Lemma}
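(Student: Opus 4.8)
The plan is to mirror the argument already given for $\mathfrak{sl}(2,\mathbb{C})$ being semisimple, adapting it to the stronger statement of simplicity and taking care with the characteristic-$2$ arithmetic (so that, e.g., $[h,f]=-f=f$). By the preceding definition we may work with the basis $\{e,f,h\}$ of $\fsl(2,F)$ and the relations $[h,e]=e$, $[h,f]=-f$, $[e,f]=h$. First I would observe that $\fsl(2,F)$ is non-abelian, since $[h,e]=e\neq 0$; so it remains only to show it has no ideal other than $\{0\}$ and itself.

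So let $I$ be a nonzero ideal and fix $0\neq x=\alpha e+\beta f+\gamma h\in I$ with $\alpha,\beta,\gamma\in F$. The key computation is $[h,x]=\alpha[h,e]+\beta[h,f]=\alpha e+\beta f$ (here $-\beta f=\beta f$ since $\charr F=2$), which therefore lies in $I$. I would then split into two cases. If $(\alpha,\beta)\neq(0,0)$, put $y=\alpha e+\beta f\in I$; bracketing with $e$ and with $f$ gives $[e,y]=\beta[e,f]=\beta h\in I$ and $[f,y]=\alpha[f,e]=-\alpha h\in I$, and since $\alpha,\beta$ are not both zero we conclude $h\in I$. If instead $\alpha=\beta=0$, then $x=\gamma h$ with $\gamma\neq0$, so directly $h\in I$. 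In either case $h\in I$, whence $e=[h,e]\in I$ and $f=-[h,f]\in I$, so $I$ contains all three generators and $I=\fsl(2,F)$. This shows $\fsl(2,F)$ is simple.

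I expect no serious obstacle here; the argument is an essentially routine eigenvector chase, just as for $\mathfrak{sl}(2,\mathbb{C})$. The one point that genuinely needs attention is the characteristic-$2$ bookkeeping — in particular remembering that $\ad h$ acts as the identity on \emph{both} $e$ and $f$ (rather than with opposite signs as for $\mathfrak{sl}(2)$), which is exactly the degeneracy that motivated introducing $\fsl(2,F)$ — and making sure the degenerate case $x\in\langle h\rangle$ is not overlooked. One could alternatively phrase the argument more structurally: $\ad h$ has minimal polynomial $t(t-1)$, so any ideal $I$ decomposes as $(I\cap\langle h\rangle)\oplus(I\cap\langle e,f\rangle)$, and a short check of each summand gives the same conclusion.
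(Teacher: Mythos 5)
Your proof is correct, and it follows the same basic strategy as the paper's: bracket with the generators until all of $e$, $f$, $h$ are forced into the ideal. The one genuine difference is that you are more careful at the start. The paper's proof only treats the three cases ``$e\in I$'', ``$f\in I$'', ``$h\in I$'' and then asserts that $I$ must contain one of $e$, $f$, $h$ --- which is not automatic, since a nonzero element of $I$ is in general a linear combination $\alpha e+\beta f+\gamma h$. Your argument closes exactly this gap: starting from an arbitrary nonzero $x=\alpha e+\beta f+\gamma h\in I$, you use $[h,x]=\alpha e+\beta f\in I$ and then the brackets $[e,\,\alpha e+\beta f]=\beta h$ and $[f,\,\alpha e+\beta f]=\alpha h$ to land $h$ in $I$ in every case, including the degenerate case $x\in\langle h\rangle$. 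You also explicitly verify non-abelianness, which the paper's definition of simplicity requires but its proof does not mention. Your alternative phrasing via the eigenspace decomposition of $\ad h$ (eigenvalues $0$ on $\langle h\rangle$ and $1$ on $\langle e,f\rangle$ in characteristic $2$) is a clean structural way to say the same thing. In short: same route as the paper, but yours is the complete version of it.
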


\begin{proof}
Let assume that the Lie algebra has a non-trivial ideal $I$, i.e., $I\neq \{0\}$ and $\fsl(2, F)$. Let $e\in I$. Then $$[e,f]=h \in I \quad \text{ and thus } \quad [h,f]=-f \in I.$$ This implies that $I$ contains all generators, so $I=\fsl(2, F)$, which is contraction. Likewise, if $f \in I$, then $$[e,f]=h \in I \quad \text{and thus} \quad [h,e]=e \in I.$$ Hence $I=\fsl(2, F)$, so this is not the case. Lastly, if $h\in I$, then $$[h,e]=e \quad\text{and}\quad [h,f]=f \in I$$ so $I=\fsl(2, F)$. This also leads to contradiction. Since $I$ contains any of $e, f$ and $h$, non-trivial ideal $I$ does not exist in $\fsl(2, F)$.
\end{proof}

\begin{Lemma}

$\fsl(2,F)$ is not solvable.

\end{Lemma}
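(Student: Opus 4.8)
The plan is to deduce non-solvability of $\fsl(2,F)$ directly from the fact, already proven in the excerpt, that $\fsl(2,F)$ is simple. The key observation is that the derived subalgebra $\fsl(2,F)^{(2)} = [\fsl(2,F), \fsl(2,F)]$ is an ideal of $\fsl(2,F)$, so by simplicity it must equal either $\{0\}$ or all of $\fsl(2,F)$. The first step is therefore to rule out $[\fsl(2,F),\fsl(2,F)] = \{0\}$: this is immediate from the defining relations, since $[e,f] = h \neq 0$ (and likewise $[h,e]=e\neq 0$), so the derived subalgebra is nonzero. Hence $\fsl(2,F)^{(2)} = \fsl(2,F)$.

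From this, the whole derived series is constant: I would argue by a trivial induction that $\fsl(2,F)^{(n)} = \fsl(2,F)$ for every $n \in \mathbb{N}$. Indeed $\fsl(2,F)^{(1)} = \fsl(2,F)$ by definition, and if $\fsl(2,F)^{(n)} = \fsl(2,F)$ then $\fsl(2,F)^{(n+1)} = [\fsl(2,F)^{(n)}, \fsl(2,F)^{(n)}] = [\fsl(2,F),\fsl(2,F)] = \fsl(2,F)$ by the previous step. Since $\fsl(2,F) \neq \{0\}$ (it is $3$-dimensional), there is no $n$ with $\fsl(2,F)^{(n)} = 0$, which is exactly the failure of solvability.

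An alternative, essentially equivalent, route is to invoke the general principle that a simple Lie algebra is never solvable: a nonzero solvable Lie algebra $L$ has $L^{(n)} = 0$ for some $n$, hence some index where $L^{(k)} \neq L^{(k+1)}$, and then $L^{(k+1)}$ would be a proper nonzero ideal (one checks $L^{(k)}$ is an ideal by the Jacobi identity) — but actually the cleanest statement is simply that $[L,L]$ is a proper ideal of a solvable $L$, contradicting simplicity unless $L$ is abelian, and $\fsl(2,F)$ is visibly non-abelian. I would present the first argument as the main proof since it is self-contained and uses only the simplicity lemma and the definition of the derived series.

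I do not expect any serious obstacle here; the only point requiring a word of care is the remark that $[\fsl(2,F),\fsl(2,F)]$ is genuinely an \emph{ideal} (not merely a subalgebra), which follows from the Jacobi identity and is standard, and the bookkeeping that $\fsl(2,F)$ is nonzero so that ``derived series stabilises at $\fsl(2,F)$'' really does contradict solvability rather than vacuously holding. Both are one-line checks.
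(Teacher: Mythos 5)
Your proposal is correct. The skeleton is the same as the paper's: establish that $\fsl(2,F)^{(2)} = \fsl(2,F)$, conclude by induction that the derived series is constant, and observe that a nonzero constant series never terminates at $0$. The one genuine difference is how the key equality $[\fsl(2,F),\fsl(2,F)] = \fsl(2,F)$ is obtained. The paper reads it off directly from the defining relations: since $[e,f]=h$, $[h,e]=e$ and $[h,f]=-f$, all three basis elements already appear as brackets, so the derived subalgebra contains a spanning set. You instead invoke the previously proved simplicity: $[\fsl(2,F),\fsl(2,F)]$ is an ideal, it is nonzero because $[e,f]=h\neq 0$, hence by simplicity it is everything. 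Your route is more general (it shows any non-abelian simple Lie algebra is non-solvable, with no further computation) and reuses an existing lemma, whereas the paper's direct computation is self-contained and does not depend on the simplicity proof having been done first --- though in this instance the simplicity proof itself rests on exactly the same bracket computations, so the two arguments are doing the same work in different packaging. Your side remarks are also sound: $[L,L]$ is an ideal simply because any bracket of an element of $L$ with an element of $[L,L]$ is again a bracket of two elements of $L$, and the non-vacuousness check ($\fsl(2,F)\neq\{0\}$) is the right thing to flag.
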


\begin{proof}

By the definition of $\fsl(2,F)$, its derived Lie subalgebra 
$$\fsl(2,F)^{(2)} = [\fsl(2,F), \fsl(2,F)]$$ 
has generators $e$, $f$, and $h$. In other words, 
$$[\fsl(2,F), \fsl(2,F)] = \fsl(2,F).$$ 
Hence, 
$$\fsl(2,F)^{(n)} = \fsl(2,F) \quad \text{for all } n \in \mathbb{N}.$$
Since the derived series $(\fsl(2,F)^{(n)})_{n \in \mathbb{N}}$ does not terminate to $0$, $\fsl(2,F)$ is not solvable.
\end{proof}

From this discussion, $\fsl(2,F)$ is simple but not solvable.

\chapter{Lie Theory over Field of Prime Characteristic}\label{Chap3}\index{Lie theory}

Lie algebras over different base fields exhibit different properties, resulting in theorems that may hold in some fields but not in others. As shown in \ref{Lie's}, \ref{Cart}, \ref{Jord}, \ref{RepLie}, and \ref{Weyl}, all of the following theorems require the assumption that the field \( F \) is algebraically closed or has characteristic zero. It turns out that these theorems do not hold when the base field has characteristic \( p > 0 \). In other words, counterexamples arise when the base field has prime characteristic.

Since this project focuses on understanding why Lie theory theorems may fail over fields of prime characteristic, and on presenting relevant counterexamples, we do not provide full proofs of the classical theorems. These proofs are readily available in \cite{EW}. Furthermore, when a Lie theory theorem is referenced in this project, its proof is typically omitted and replaced by a citation to \cite{EW}, as detailed proofs are not the main focus here.

\section{Lie's Theorem over Field of Prime Characteristic}\label{Lie's}

\begin{Theorem}[Lie's Theorem]\index{Lie's Theorem}

Let $F$ be algebraically closed, $V$ an $n$-dimensional $F$-vector space, and $L \subset \gl(V)$ a solvable linear Lie algebra. Then there exists an associative isomorphism $f: \End(V) \rightarrow \gl(n, F)$ such that $f$ maps every element in $L$ onto an upper triangular matrix.

\end{Theorem}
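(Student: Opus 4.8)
The plan is to prove Lie's theorem by the standard inductive argument on $\dim L$, reformulated slightly to match the statement's phrasing about an associative isomorphism $f\colon \End(V)\to\gl(n,F)$ that triangularises $L$. The first step is to reduce the claim to the existence of a common eigenvector: I would show by induction on $\dim V$ that if every element of $L$ has a simultaneous eigenvector $v\in V$, then choosing $v$ as the first basis vector, passing to $V/Fv$ (on which $L$ acts since $Fv$ is $L$-invariant), and applying the inductive hypothesis to the induced solvable subalgebra of $\gl(V/Fv)$ yields a basis $v=v_1,v_2,\dots,v_n$ of $V$ in which every element of $L$ is upper triangular; the desired $f$ is then the associative isomorphism $\End(V)\to\gl(n,F)$ given by reading off matrices in this basis (which exists by Remark~\ref{End(iv)}).

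The heart of the proof is therefore the \emph{invariant subspace lemma}: a solvable $L\subset\gl(V)$ with $V\neq 0$ over an algebraically closed field $F$ has a common eigenvector. Here I would induct on $\dim L$. If $\dim L\le 1$ the statement is just that a single endomorphism over an algebraically closed field has an eigenvector, which holds because its characteristic polynomial splits. For the inductive step, since $L$ is solvable and nonzero, $L^{(2)}=[L,L]\subsetneq L$, so one can pick a codimension-one subspace $K$ with $[L,L]\subseteq K\subseteq L$; such $K$ is automatically an ideal of $L$. By induction $K$ has a common eigenvector in $V$, i.e. there is a linear functional $\lambda\colon K\to F$ such that $W=\{w\in V : x\cdot w=\lambda(x)w \text{ for all } x\in K\}$ is nonzero. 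Writing $L=K\oplus Fz$ for some $z$, one checks that $W$ is $z$-invariant (this is the Invariance Lemma / Dynkin's argument), so $z$ acting on $W$ has an eigenvector, which is then a common eigenvector for all of $L$.

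The one genuinely delicate point, and the step I expect to be the main obstacle, is showing that $W$ is stable under $z$, because this is precisely where the characteristic-zero hypothesis is used (and whose failure in characteristic $p$ is the whole motivation of the thesis). The argument runs: for $w\in W$ and $x\in K$, write $x\cdot(z\cdot w)=z\cdot(x\cdot w)-[z,x]\cdot w=\lambda(x)(z\cdot w)-\lambda([z,x])w$, so it suffices to prove $\lambda([z,x])=0$ for all $x\in K$. Fixing $w\in W$, one considers the subspace $U$ spanned by $w,z\cdot w,z^2\cdot w,\dots$; a short computation shows each $x\in K$ acts on $U$ as an upper triangular matrix with all diagonal entries equal to $\lambda(x)$, hence $\operatorname{tr}_U(x)=(\dim U)\lambda(x)$. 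Applying this to $x=[z,y]$ for $y\in K$ and using that $[z,y]\in K$ together with $\operatorname{tr}_U([z,y])=\operatorname{tr}_U(zy-yz)=0$ gives $(\dim U)\lambda([z,y])=0$; since $\operatorname{char} F=0$ we may divide by $\dim U$ to conclude $\lambda([z,y])=0$. Once $W$ is known to be $z$-invariant, the rest is routine, and I would cite \cite{EW} for the full details of this standard computation rather than reproduce it in full.
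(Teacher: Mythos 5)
Your proposal is correct and is exactly the standard argument: the paper itself gives no proof of this theorem, offering only a citation to \cite[Section 9.2]{FH}, and the induction on $\dim L$ via a codimension-one ideal, the invariance lemma for the weight space $W$, and the trace computation $(\dim U)\lambda([z,y])=0$ is precisely the proof found in that reference. One point worth flagging: your argument (correctly) uses $\operatorname{char} F=0$ at the final division by $\dim U$, but the theorem as stated in the paper assumes only that $F$ is algebraically closed and omits the characteristic-zero hypothesis; as written the statement is actually false, as the paper's own counterexample in the very next subsection shows, so your identification of where characteristic zero enters is not a defect of your proof but a needed correction to the statement's hypotheses.
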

\begin{proof}
See \cite[Section 9.2]{FH}.
\end{proof}

An important assumption of Lie's theorem is that $F$ is algebraically closed. Otherwise, there is a counterexample, which is the following.

\begin{Example}

Consider $F = \mathbb{R}$ and a linear Lie subalgebra 
$$ L = \left\langle \begin{pmatrix} 
0 & -1 \\ 
1 & 0 
\end{pmatrix} \right\rangle \subset \gl(2, \mathbb{R}). $$
Note that the characteristic polynomial of the generator of $L$ is $\lambda^2 + 1 = 0$, so $\lambda = i, -i \notin \mathbb{R}$. This implies that the generator cannot be represented as an upper triangular matrix, and thus neither can any element in $L$. See \ref{ACFLT}. 

Indeed, if the generator were represented as an upper triangular matrix, its characteristic polynomial would be $(\lambda - i)(\lambda + i) = 0$. However, we cannot write $i, -i$ as diagonal entries because the matrix would not be over $F = \mathbb{R}$. This shows that Lie's theorem does not hold when $F$ is not algebraically closed.

\end{Example}

\subsection{A Counterexample of Lie's Theorem}

A counterexample exists that causes Lie's theorem to not hold if $F$ has prime characteristic. The following example refers to \cite[Exercise 3]{H} and \cite[Exercise 6.4]{EW}.

\begin{Example}

Let $p$ be prime and let $F$ be a field of characteristic $p$. Consider the $p \times p$ matrices $x$ and $y$:

$$
x = \begin{pmatrix}
0 & 1 & 0 & \cdots & 0 \\
0 & 0 & 1 & \cdots & 0 \\
\vdots & \vdots & \vdots & \ddots & \vdots \\
0 & 0 & 0 & \cdots & 1 \\
1 & 0 & 0 & \cdots & 0
\end{pmatrix}, \qquad
y = \begin{pmatrix}
0 & 0 & \cdots & 0 & 0 \\
0 & 1 & \cdots & 0 & 0 \\
\vdots & \vdots & \ddots & \vdots & \vdots \\
0 & 0 & \cdots & p-2 & 0 \\
0 & 0 & \cdots & 0 & p-1
\end{pmatrix}.
$$

Let $S$ be a $2$-dimensional subalgebra of $\gl(n, F)$ generated by $x$ and $y$. Recall that $S^{(2)} = [S, S] = \langle [s, t] \;|\; s, t \in S \rangle_F$ is the derived Lie subalgebra of $S$. Consider the commutator of two basis elements $x$ and $y$ in $S$ in order to find generators of $S^{(2)}$.

\[
[x, y] = 
\begin{pmatrix}
0 & 1 & 0 & \cdots & 0 \\
0 & 0 & 2 & \cdots & 0 \\
\vdots & \vdots & \vdots & \ddots & \vdots \\
0 & 0 & 0 & \cdots & p-1 \\
0 & 0 & 0 & \cdots & 0
\end{pmatrix}
- 
\begin{pmatrix}
0 & 0 & 0 & \cdots & 0 & 0 \\
0 & 0 & 1 & \cdots & 0 & 0 \\
\vdots & \vdots & \vdots & \ddots & \vdots & \vdots \\
0 & 0 & 0 & \cdots & 0 & p-2 \\
p-1 & 0 & 0 & \cdots & 0 & 0
\end{pmatrix}
\]

\[
= 
\begin{pmatrix}
0 & 1 & 0 & \cdots & 0 \\
0 & 0 & 1 & \cdots & 0 \\
\vdots & \vdots & \vdots & \ddots & \vdots \\
0 & 0 & 0 & \cdots & 1 \\
-(p-1) & 0 & 0 & \cdots & 0
\end{pmatrix}
= 
\begin{pmatrix}
0 & 1 & 0 & \cdots & 0 \\
0 & 0 & 1 & \cdots & 0 \\
\vdots & \vdots & \vdots & \ddots & \vdots \\
0 & 0 & 0 & \cdots & 1 \\
1 & 0 & 0 & \cdots & 0
\end{pmatrix}
= x.
\]

Hence, $S^{(2)}$ is a subalgebra of $S$ generated by $x$. Then $S^{(3)} = [S^{(2)}, S^{(2)}]$ is zero since $S$ is anti-commutative, so $[x, x] = 0$ and $x$ is the only generator of $S^{(2)}$. This implies that $S$ is solvable.

To find eigenvectors $v$ of $x$, the equation $(x - \lambda I)v = 0$ shows that

$$
\begin{pmatrix}
-\lambda & 1 & 0 & \cdots & 0 \\
0 & -\lambda & 1 & \cdots & 0 \\
\vdots & \vdots & \vdots & \ddots & \vdots \\
0 & 0 & 0 & \cdots & 1 \\
1 & 0 & 0 & \cdots & -\lambda
\end{pmatrix}
\begin{pmatrix}
v_1 \\
v_2 \\
\vdots \\
v_{p-1} \\
v_p
\end{pmatrix}
= 0
$$

$$
\begin{pmatrix}
-\lambda v_1 + v_2 \\
-\lambda v_2 + v_3 \\
\vdots \\
-\lambda v_{p-1} + v_p \\
-\lambda v_p + v_1
\end{pmatrix}
= 0.
$$

Every entry should be zero, so $v_{k+1} = \lambda v_k$ for all $k = 1, 2, \cdots, p-1$ and $v_1 = \lambda v_p$. This means $v_{k+1} = \lambda^k v_1$ for all $k = 1, 2, \cdots, p-1$ and $v_1 = \lambda^p v_1$. Hence, $(\lambda^p - 1)v_1 = 0$. If $v_1 = 0$, then $v_1 = v_2 = \cdots = v_p = 0$, so no eigenvectors in this case. Thus, $\lambda^p = 1$. Since $(\lambda - 1)^p = \lambda^p + \binom{p}{1} \lambda^{p-1} + \binom{p}{2} \lambda^{p-2} + \cdots + (-1)^p = \lambda^p - 1 = 0$, this implies that $\lambda = 1$. Thus, the eigenspace $E_1$ is generated by $e_1 + e_2 + \cdots + e_p$, i.e., $E_1 = \langle e_1 + \cdots + e_p \rangle$.

For $(y - \lambda I)u = 0$,

$$
\begin{pmatrix}
-\lambda & 0 & \cdots & 0 & 0 \\
0 & 1 - \lambda & \cdots & 0 & 0 \\
\vdots & \vdots & \ddots & \vdots & \vdots \\
0 & 0 & \cdots & p - 2 - \lambda & 0 \\
0 & 0 & \cdots & 0 & p - 1 - \lambda
\end{pmatrix}
\begin{pmatrix}
u_1 \\
u_2 \\
\vdots \\
u_{p-1} \\
u_p
\end{pmatrix}
= 0
$$

$$
\begin{pmatrix}
-\lambda u_1 \\
(1 - \lambda) u_2 \\
\vdots \\
(p - 2 - \lambda) u_{p-1} \\
(p - 1 - \lambda) u_p
\end{pmatrix}
= 0.
$$

When $\lambda = i$ where $i = 0, 1, \cdots, p - 1$, the corresponding eigenspace $E_i'$ for $y$ is generated by $e_{i+1}$, i.e., $E_0' = \langle e_1 \rangle, E_1' = \langle e_2 \rangle, \cdots, E_{p-1}' = \langle e_p \rangle$.

Since each eigenspace of $y$ does not intersect with the eigenspace $E_1$ of $x$, $x$ and $y$ have no common eigenvector. Thus, it is not true that every element in $S$ can be mapped onto an upper triangular matrix.
\end{Example}

\section{Cartan's Criterion over Field of Prime Characteristic}\label{Cart}

As Lie's theorem does not hold in prime characteristic fields, another well-known theorem called \textit{Cartan's Criterion} is needed to be checked whether this theorem holds in prime characteristic fields or not. The answer is no, since this theorem has a counterexample $\fsl(2,\mathbb{C})$ over a field with characteristic $2$.

\begin{Theorem}[Cartan's Criterion]\label{cc}\index{Cartan's Criterion}

Let $L$ be a Lie algebra over $F$ of characteristic $0$ and $\rho$ be a representation of $L$. Then the following statements are equivalent:

\item[1.]
$\tr(\rho(a) \circ \rho(b))=0 \quad \text{for} \; a\in L, b \in [L,L]$.

\item[2.]
$\tr(\rho(a) \circ \rho(a))=0 \quad \text{for all} \; a\in[L,L]$.

\item[3.]
$L$ is solvable.

\end{Theorem}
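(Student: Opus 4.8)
The plan is to establish the cycle of implications $3 \Rightarrow 1 \Rightarrow 2 \Rightarrow 3$. The step $1 \Rightarrow 2$ is immediate: since $[L,L] \subseteq L$, for $a \in [L,L]$ we may take both arguments of statement~1 to be $a$, giving $\tr(\rho(a) \circ \rho(a)) = 0$. For $3 \Rightarrow 1$, note that $\rho(L)$ is a homomorphic image of the solvable algebra $L$, hence solvable by Lemma~\ref{solvablelem}(i); since the derived series commutes with extension of scalars, $\rho(L) \otimes_F \overline{F}$ is a solvable linear Lie algebra over the algebraically closed field $\overline{F}$, so Lie's Theorem puts all of its elements into simultaneous upper triangular form. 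Then every element of $[\rho(L),\rho(L)] = \rho([L,L])$ is strictly upper triangular, so for $a \in L$ and $b \in [L,L]$ the product $\rho(a) \circ \rho(b)$ is strictly upper triangular and hence has trace $0$, and this trace agrees with the one computed over $F$.

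The real content is $2 \Rightarrow 3$. Since solvability and the trace conditions are insensitive to extending scalars to $\overline{F}$, I would assume $F = \overline{F}$, so in particular $\mathbb{Q} \subseteq F$ and every endomorphism of $V$ admits a Jordan--Chevalley decomposition. Polarising statement~2 gives $\tr(\rho(a) \circ \rho(b)) = 0$ for all $a, b \in [L,L]$, i.e. the trace form vanishes identically on $K \times K$, where $K := \rho([L,L]) \subseteq \gl(V)$. I would then invoke the following \emph{nilpotency lemma}: if $A \subseteq B$ are subspaces of $\gl(V)$, $M := \{ z \in \gl(V) : [z, B] \subseteq A \}$, and $x \in M$ satisfies $\tr(x \circ y) = 0$ for every $y \in M$, then $x$ is nilpotent. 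Applying it with $A = [K,K]$ and $B = K$ (so $[K,K] \subseteq K \subseteq M$), and using the cyclicity identity $\tr([u,v] \circ y) = \tr(u \circ [v,y])$ together with $[v,y] \in [K,K] \subseteq K$ for $u, v \in K$ and $y \in M$, shows that every element of $[K,K]$ is a nilpotent endomorphism. By Engel's Theorem (see \cite{EW}) this makes $[K,K]$ a nilpotent, hence solvable, Lie algebra, and since $K/[K,K]$ is abelian, Lemma~\ref{solvablelem}(ii) gives that $K = \rho([L,L])$ is solvable. Finally, provided $\rho$ is faithful --- or more generally $\ker\rho$ is solvable, which holds for $\rho = \ad$ since $\ker(\ad)$ is the abelian centre of $L$ --- this forces $[L,L]$ to be solvable, and one more application of Lemma~\ref{solvablelem}(ii) with $I = [L,L]$ shows $L$ is solvable.

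It remains to prove the nilpotency lemma, and this is where characteristic zero is genuinely used; I expect it to be the main obstacle. Writing the Jordan--Chevalley decomposition $x = s + n$ in $\gl(V)$, I would fix an eigenbasis $v_1, \dots, v_m$ of $s$ with eigenvalues $a_1, \dots, a_m$ and let $E$ be the $\mathbb{Q}$-span of the $a_i$ inside $F$; the goal is $E = 0$, for which it suffices to show that an arbitrary $\mathbb{Q}$-linear functional $f : E \to \mathbb{Q}$ vanishes. Given $f$, the diagonal operator $y$ with $y v_i = f(a_i) v_i$ satisfies $\ad y = r(\ad s)$ for a polynomial $r$ with $r(0) = 0$ obtained by Lagrange interpolation (well defined because $f$ is additive); since $\ad s$ is a polynomial in $\ad x$ without constant term and $x \in M$, it follows that $\ad y$ maps $B$ into $A$, so $y \in M$. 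Then $0 = \tr(x \circ y) = \sum_i a_i f(a_i)$ (the nilpotent part $n$ preserves each eigenspace of $s$, on which it has trace $0$), and applying $f$ gives $\sum_i f(a_i)^2 = 0$, a sum of squares of rationals; hence all $f(a_i) = 0$, so $f = 0$, $s = 0$, and $x = n$ is nilpotent. The points needing care are the insensitivity of every hypothesis to the extension $F \subseteq \overline{F}$, the bookkeeping that a polynomial in $\ad x$ without constant term sends $B$ into $A$, and --- as flagged above --- the passage from solvability of $\rho(L)$ back to solvability of $L$, which really does need some control on $\ker\rho$.
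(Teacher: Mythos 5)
Your argument is essentially correct, but there is nothing in the paper to compare it against: the paper's entire proof of this theorem is the single line ``See \cite[Chapter 1]{SF}.'' What you have written is the standard textbook argument (the trace-form nilpotency lemma of Humphreys' \S 4.3, which is also what underlies the cited source), and the details check out: $3 \Rightarrow 1$ via Lie's theorem after extending scalars, the polarisation of statement~2 (legitimate because $\operatorname{char} F = 0$, so one may divide by $2$), the application of the nilpotency lemma with $A = [K,K]$, $B = K$ and the cyclicity trick $\tr([u,v]\circ y) = \tr(u\circ[v,y])$, Engel's theorem, and the $\mathbb{Q}$-linear-functional argument with the Jordan--Chevalley decomposition, including the point that a polynomial in $\ad x$ with zero constant term carries $B$ into $A$ because $A \subseteq B$. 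The one issue of substance is the caveat you flag yourself: as printed, with $\rho$ an arbitrary representation, the implication $2 \Rightarrow 3$ is simply false --- take $L = \mathfrak{sl}(2,\mathbb{C})$ and $\rho = 0$, so that statement~2 holds vacuously while $L$ is not solvable. Your proof correctly isolates the missing hypothesis (that $\ker\rho$ be solvable, which is automatic for $\rho = \ad$ since $\ker(\ad)$ is the abelian centre of $L$); this is a defect of the theorem statement rather than of your argument, and the standard references indeed assume $\rho$ faithful or state the criterion for the adjoint representation. In short, your proposal is not only a valid substitute for the proof the paper omits but also a useful correction to the statement it quotes.
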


\begin{proof}
See \cite[Chapter 1]{SF}.
\end{proof}

\subsection{A Counterexample of Cartan's Criterion: $\fsl(2,F)$}

$\mathfrak{fsl}(2, F)$ is a counterexample to Cartan's Criterion when the base field has characteristic 2.

\begin{Example}

Consider the adjoint representation $\ad$ of $L$ for Cartan's Criterion. Then the second equivalent statement is

\begin{equation}\label{sec}
\tr(\ad a \circ \ad a)=0 \quad \text{for all} \; a \in [\mathfrak{sl}(2,F), \mathfrak{sl}(2,F)].
\end{equation}

Since $[\mathfrak{sl}(2,F), \mathfrak{sl}(2,F)]$ has the same generators as $\mathfrak{sl}(2,F)$, it is sufficient to check the traces of $\ad e \circ \ad e$, $\ad f \circ \ad f$, and $\ad h \circ \ad h$ with respect to the basis $\{e, f, h\}$.

\[
(\ad e \circ \ad e)(e) = 0, \quad (\ad e \circ \ad e)(f) = -e, \quad (\ad e \circ \ad e)(h) = 0.
\]
Therefore, the corresponding matrix of $(\ad e \circ \ad e)$ is
\[
\begin{pmatrix}
0 & -1 & 0 \\
0 & 0 & 0 \\
0 & 0 & 0
\end{pmatrix}.
\]
\[
(\ad f \circ \ad f)(e) = f, \quad (\ad f \circ \ad f)(f) = 0, \quad (\ad f \circ \ad f)(h) = 0,
\]
and the corresponding matrix of $(\ad f \circ \ad f)$ is
\[
\begin{pmatrix}
0 & 0 & 0 \\
1 & 0 & 0 \\
0 & 0 & 0
\end{pmatrix}.
\]
\[
(\ad h \circ \ad h)(e) = e, \quad (\ad h \circ \ad h)(f) = f, \quad (\ad h \circ \ad h)(h) = 0.
\]
The corresponding matrix of $(\ad h \circ \ad h)$ is
\[
\begin{pmatrix}
1 & 0 & 0 \\
0 & 1 & 0 \\
0 & 0 & 0
\end{pmatrix}.
\]

Hence, the trace of each is $\tr(\ad e \circ \ad e) = 0$, $\tr(\ad f \circ \ad f) = 0$, and $\tr(\ad h \circ \ad h) = 2 = 0$ since $\operatorname{char} F = 2$. This implies that $\mathfrak{sl}(2, F)$ satisfies the second statement \eqref{sec} of Cartan's criterion.

However, by the previous discussion in \ref{defpropfsl}, $\mathfrak{sl}(2, F)$ is not solvable. Therefore, there exists a counterexample $\mathfrak{sl}(2, F)$ of Cartan's Criterion in a base field with characteristic $2$. This implies that Cartan's Criterion is not satisfied in Lie algebras of prime characteristic.

\end{Example}

\subsection{Cartan's Criterion and the Killing Form over Field of Prime Characteristic}

By the definition of the Killing form, the corollary follows.

\begin{Corollary}\label{kcc}

A finite-dimensional Lie algebra $L$ over $F$ with characteristic $0$ is solvable if and only if $\kappa(L, [L, L])=0$.

\end{Corollary}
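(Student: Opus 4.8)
The plan is to deduce this Corollary directly from Cartan's Criterion (Theorem~\ref{cc}) by specializing the representation $\rho$ to the adjoint representation $\ad$ of $L$. The key observation is that the Killing form is by definition $\kappa(x,y) = \tr(\ad x \circ \ad y)$, so the quantities appearing in statements~1 and~2 of Cartan's Criterion, when $\rho = \ad$, are exactly values of $\kappa$. First I would write out what statement~1 of Theorem~\ref{cc} says for $\rho = \ad$: it reads $\tr(\ad a \circ \ad b) = 0$ for all $a \in L$ and $b \in [L,L]$, which is precisely the assertion $\kappa(a,b) = 0$ for all such $a,b$, i.e. $\kappa(L,[L,L]) = 0$. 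Thus the condition $\kappa(L,[L,L])=0$ is literally statement~1 of Cartan's Criterion applied to the adjoint representation.

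Next I would invoke the equivalence $1 \Leftrightarrow 3$ from Theorem~\ref{cc}: statement~1 (for the adjoint representation) holds if and only if $L$ is solvable. This immediately gives the biconditional claimed in the Corollary. Since Theorem~\ref{cc} is stated for an arbitrary representation $\rho$, and $\ad \colon L \to \gl(L)$ is a representation of $L$ (as recorded in Definition~\ref{adDef} and the surrounding remarks), there is no obstacle in choosing $\rho = \ad$; the hypothesis that $L$ is finite-dimensional, carried in the Corollary's statement, ensures $\gl(L)$ is finite-dimensional so that the traces make sense.

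One small point I would address explicitly is that $\kappa(L,[L,L]) = 0$ is symmetric in a sense that makes it interchangeable with $\kappa([L,L],L) = 0$: since $\kappa$ is a symmetric bilinear form, $\kappa(a,b) = \kappa(b,a)$, so writing the condition as $\kappa(L,[L,L])=0$ unambiguously captures statement~1 of Cartan's Criterion regardless of the order of arguments. With that remark in place, the proof is essentially a one-line translation. I do not anticipate any genuine obstacle here; the only thing to be careful about is making sure the reader sees that the content of the Corollary is not new but is exactly the adjoint-representation instance of the already-cited Theorem~\ref{cc}, and that the characteristic-zero hypothesis is inherited verbatim from that theorem. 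If I wanted to add value I might remark that the same substitution in statement~2 shows $L$ is solvable iff $\kappa(a,a) = 0$ for all $a \in [L,L]$, but that is an optional aside rather than part of the required argument.
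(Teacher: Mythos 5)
Your proposal is correct and follows exactly the paper's own argument: specialize $\rho = \ad$ in Cartan's Criterion so that statement~1 becomes $\kappa(a,b)=0$ for all $a \in L$, $b \in [L,L]$, i.e.\ $\kappa(L,[L,L])=0$, and then invoke the equivalence with solvability. The extra remarks on symmetry of $\kappa$ and on statement~2 are harmless additions but not needed.
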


\begin{proof}

Put \( \rho = \ad \) in Cartan's Criterion \ref{cc}. Then the first equivalent statement can be expressed with the Killing form as
$$ \kappa(a, b) = 0 \quad \text{for all } a \in L, b \in [L, L]. $$
Since this statement is equivalent to \( \kappa(L, [L, L]) = 0 \), the result follows.
\end{proof}

From \cite[Theorem 3.7]{IS}, Cartan's criterion can be stated as the following.

\begin{Theorem}[Cartan's Criterion for Semisimplicity]\label{CCS}

Let the base field be $\mathbb{C}$.  
A Lie algebra $L$ is semisimple if and only if its Killing form $\kappa$ is non-degenerate.

\end{Theorem}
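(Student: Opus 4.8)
The plan is to prove the two implications separately, writing $\rad(\kappa)=\{x\in L\mid\kappa(x,y)=0\ \text{for all }y\in L\}$ for the radical of the Killing form and recalling that, by definition, $L$ is semisimple exactly when $\rad(L)=0$. I would deduce the forward implication from Corollary~\ref{kcc} (hence ultimately from Cartan's Criterion~\ref{cc}), whereas the reverse implication is a short direct argument about nilpotent endomorphisms. The associativity identity $\kappa([x,y],z)=\kappa(x,[y,z])$ proved above is the workhorse in both directions.

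For ($\Rightarrow$), assume $\rad(L)=0$ and set $S=\rad(\kappa)$; I want $S=\{0\}$. First, $S$ is an ideal of $L$: for $x\in S$ and $z\in L$ we have $\kappa([x,z],y)=\kappa(x,[z,y])=0$ for every $y\in L$, so $[x,z]\in S$. Next I would show $S$ is solvable. Since $S$ is an ideal, for $s\in S$ the map $\ad s$ sends $L$ into $S$, so in a basis of $L$ extending one of $S$ the matrix of $\ad s$ is block upper triangular with lower-right block zero; a trace computation then shows that the Killing form of the Lie algebra $S$ equals $\kappa|_{S\times S}$. In particular $\kappa_{S}(S,[S,S])\subseteq\kappa(S,L)=0$, so Corollary~\ref{kcc}, applied to the finite-dimensional Lie algebra $S$ over $\mathbb{C}$, gives that $S$ is solvable. (Alternatively, apply Theorem~\ref{cc} to the representation $\rho=\ad|_{S}\colon S\to\gl(L)$: its first statement holds because $\tr(\rho(a)\circ\rho(b))=\kappa(a,b)=0$ for all $a,b\in S$, and the only gap, that $\rho$ need not be faithful, is closed by Lemma~\ref{solvablelem}(ii) since $\ker\rho=S\cap Z(L)$ is abelian.) Thus $S$ is a solvable ideal of $L$, so $S\subseteq\rad(L)=\{0\}$, and $\kappa$ is non-degenerate.

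For ($\Leftarrow$), assume $\kappa$ is non-degenerate and suppose for contradiction that $R:=\rad(L)\neq\{0\}$. I would first check, by induction on $n$ using the Jacobi identity and the fact that $R$ is an ideal, that every derived term $R^{(n)}$ is an ideal of $L$; taking the last nonzero term yields a nonzero abelian ideal $A$ of $L$. Fix $a\in A$ and $x\in L$ and put $T=\ad a\circ\ad x\in\End(L)$. Because $A$ is an ideal, $T(L)=[a,[x,L]]\subseteq[a,L]\subseteq A$, and because $A$ is abelian, $T(A)=[a,[x,A]]\subseteq[a,A]=\{0\}$; hence $T^{2}(L)\subseteq T(A)=\{0\}$, so $T$ is nilpotent and $\kappa(a,x)=\tr(T)=0$. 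As $x\in L$ was arbitrary, $a\in\rad(\kappa)=\{0\}$, so $A=\{0\}$, contradicting $A\neq\{0\}$. Therefore $\rad(L)=\{0\}$, that is, $L$ is semisimple.

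The step I expect to be the crux is establishing solvability of $S=\rad(\kappa)$ in the forward direction: it hinges on the relation between $\ad_{S}$ and the restriction of $\ad$ to $S$ (or, in the alternative route, on the non-faithfulness of $\ad|_{S}$), which is precisely where the difference between a subalgebra and an ideal must be handled with care. Everything else — the ideal property of $\rad(\kappa)$, the ideal property of the derived terms of $\rad(L)$, and the nilpotency of $T$ — is routine manipulation with the Jacobi identity and the associativity of $\kappa$, and the reverse implication in particular is entirely elementary once those ideal properties are in hand.
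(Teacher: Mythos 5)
Your argument is correct and follows the same overall skeleton as the paper's proof: forward direction by showing $\rad(\kappa)$ is a solvable ideal and hence sits inside $\rad(L)=0$, reverse direction by showing every abelian ideal lies in $\rad(\kappa)$. The difference is one of self-containment and of the final trace computation. Where the paper simply cites the facts that $\rad(\kappa)$ is a solvable ideal and that a Lie algebra with no nonzero abelian ideal is semisimple, you prove both: you derive solvability of $S=\rad(\kappa)$ from Corollary~\ref{kcc} after checking that the Killing form of the ideal $S$ agrees with $\kappa|_{S\times S}$ (and you correctly flag, in the alternative route via $\ad|_{S}$, that Theorem~\ref{cc} as stated needs the non-faithfulness of $\ad|_{S}$ repaired by Lemma~\ref{solvablelem}(ii), since $\ker(\ad|_{S})=S\cap Z(L)$ is abelian); and you manufacture the abelian ideal explicitly as the last nonzero term of the derived series of $\rad(L)$, each term being an ideal of $L$ by the Jacobi identity. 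In the reverse direction the paper computes $\kappa(x,y)=\tr(\phi|_{H})+\tr(\phi|_{L/H})=0$ by a block decomposition, whereas you observe that $T=\ad a\circ\ad x$ satisfies $T^{2}=0$ and so has trace zero; the two computations are equivalent, but the nilpotency argument is arguably cleaner and avoids having to justify the additivity of the trace over the filtration $H\subset L$. Your version buys a proof that actually closes the logical gaps the paper leaves to citations, at the cost of being longer; the one step genuinely worth the care you give it is, as you say, the identification of the Killing form of the ideal $S$ with the restriction of $\kappa$, which is exactly where ``ideal'' rather than ``subalgebra'' is essential.
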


\begin{proof}

If $L$ is semisimple, then $\rad(L) = \{0\}$. Since $\rad(\kappa)$ is a solvable ideal by \cite{EW}, it follows that $\rad(\kappa) \subset \rad(L) = \{0\}$. Thus, $\kappa$ is non-degenerate.

For the converse, suppose the Killing form $\kappa$ is non-degenerate, i.e., $\rad(\kappa) = \{0\}$. Let $H \subset L$ be an abelian ideal. Then $H$ is $L$-stable under the adjoint representation $\rho = \ad$.

Define the map $\phi = \ad(x) \circ \ad(y) \in \gl(L)$ for $(x, y) \in H \times L$. We show that $\phi|_H = 0$ and $\phi|_{L/H} = 0$.

For any $z \in H$, since $[y, z] \in H$ and $H$ is abelian, we have
\[
\phi|_H(z) = [x, [y, z]] = 0.
\]

Now pick $z \in L$. Then in the quotient $L/H$, we have
\[
\phi|_{L/H}(z + H) = [x, [y, z]] + H = 0 + H,
\]
since $x \in H$ and $[x, [y, z]] \in H$.

Therefore,
\[
\kappa(x, y) = \operatorname{tr}(\phi) = \operatorname{tr}(\phi|_H) + \operatorname{tr}(\phi|_{L/H}) = 0
\]
for all $(x, y) \in H \times L$. This implies $H \subset \rad(\kappa) = \{0\}$, and hence $H = \{0\}$. Thus, $L$ has no nonzero abelian ideals and is therefore semisimple by \cite{EW}.
\end{proof}

In the previous section, we showed that $\fsl(2,F)$ is simple but not semisimple. Moreover, $\fsl(2,F)$ is not semisimple, but its Killing form is non-degenerate. Hence, $\fsl(2,F)$ is a counterexample to \ref{CCS} in a prime characteristic field.

\section{Preservation of Jordan Decomposition over Field of Prime Characteristic}\label{Jord}

\begin{Definition}

The \textit{Jordan decomposition}\index{Jordan decomposition} of $x$ is the unique expression of $x$ as $x=d+n$ where $d ,n \in \End (V)$ are such that $d$ is diagonalisable, $n$ is nilpotent, and $[d,n]=0$.

\end{Definition}

\begin{Theorem}
Let $L$ be a Lie algebra over algebraically closed field $F$. Then given any $x\in L$ has a Jordan decomposition.

\end{Theorem}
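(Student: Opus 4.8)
The plan is to reduce the statement to the Jordan--Chevalley decomposition of a single endomorphism. Since every Lie algebra in play here is linear, we regard $L$ as a subalgebra of $\gl(V)=\End(V)_L$ for a finite-dimensional $F$-vector space $V$ and view $x$ as an element of $\End(V)$; the claim is then that $x=d+n$ with $d$ diagonalisable, $n$ nilpotent and $[d,n]=0$, which is exactly the decomposition named in the definition just above. The only place algebraic closure intervenes is to guarantee that the characteristic polynomial $\chi_x(t)\in F[t]$ splits completely, $\chi_x(t)=\prod_{i=1}^{r}(t-\lambda_i)^{m_i}$ with the $\lambda_i\in F$ pairwise distinct; over a non-closed field this can fail, and that failure is precisely what produces the counterexamples in the next section.

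First I would prove existence. Make $V$ a module over the principal ideal domain $F[t]$ by letting $t$ act as $x$. By Cayley--Hamilton $\chi_x(x)=0$, and the factors $(t-\lambda_i)^{m_i}$ are pairwise coprime, so the Chinese Remainder Theorem gives an $x$-stable decomposition $V=\bigoplus_{i=1}^{r}V_i$ with $V_i:=\ker\bigl((x-\lambda_i)^{m_i}\bigr)$ the generalised eigenspace at $\lambda_i$. Using the Chinese Remainder Theorem a second time, choose $p(t)\in F[t]$ with $p(t)\equiv\lambda_i\pmod{(t-\lambda_i)^{m_i}}$ for every $i$ (one may also arrange $p(0)=0$; the congruences are consistent). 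Put $d:=p(x)$ and $n:=x-d$. On each $V_i$ the operator $d$ acts as the scalar $\lambda_i$, so $d$ is diagonalisable; on $V_i$ the operator $n$ acts as $x-\lambda_i$, which is nilpotent there by the definition of $V_i$, so $n$ is nilpotent on $V$; and $d,n$ are polynomials in $x$, hence $[d,n]=0$.

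Next I would establish uniqueness. Suppose also $x=d'+n'$ with $d'$ diagonalisable, $n'$ nilpotent and $[d',n']=0$. Then $d'$ and $n'$ commute with $x=d'+n'$, hence with every polynomial in $x$, in particular with $d$ and $n$. Rewriting $d-d'=n'-n$, the left-hand side is a difference of commuting diagonalisable operators, hence diagonalisable, while the right-hand side is a difference of commuting nilpotent operators, hence nilpotent; an operator that is both diagonalisable and nilpotent is $0$, so $d=d'$ and $n=n'$.

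The routine content above is purely linear algebra; the genuine subtlety is one of \emph{scope} rather than of proof. As literally phrased, ``$x\in L$'' presupposes a chosen realisation of $L$ inside some $\End(V)$, so one should either read the statement with $L$ linear, as above, or — in the intrinsic version appropriate to a semisimple $L$ — replace $x$ by $\ad x\in\gl(L)$ and then argue that the parts $d,n$ of $\ad x$ are themselves of the form $\ad(\,\cdot\,)$, which rests on the facts that $\ad L=\mathrm{Der}(L)$ and that the semisimple and nilpotent parts of a derivation are again derivations. I would carry out the linear-algebraic argument in full and append a remark clarifying this point, since the aim of the following section is to show that even the cleanest form of the statement loses its force once $\charr F=p$.
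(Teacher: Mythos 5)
Your proof is correct: it is the standard Jordan--Chevalley argument (primary decomposition of $V$ as an $F[t]$-module via Cayley--Hamilton and the Chinese Remainder Theorem, $d$ realised as a polynomial in $x$, and the usual commutativity argument for uniqueness). The paper itself gives no proof here --- it only writes ``See \cite{EW}'' --- and the cited source proves the result by essentially the same route, so your write-up simply supplies in full what the paper defers; your closing remark about the scope of the statement (that ``$x\in L$'' only makes literal sense for a linear $L$, and that the intrinsic version for semisimple $L$ goes through $\ad x$ and the fact that the semisimple and nilpotent parts of a derivation are again derivations) is an accurate and worthwhile clarification of a genuine imprecision in the statement as printed.
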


\begin{proof}
See \cite{EW}.
\end{proof}

A counterexample exists in a prime characteristic field.

\begin{Example}
Consider $\gl(2,F)$ where $\operatorname{char} F = 2$. Then there exists a matrix that does not have distinct eigenvalues. Therefore, the matrix does not have the Jordan normal form that is a diagonal matrix.
\end{Example}

However, if the condition of diagonalisable \( d \) changes to semisimple \( s \), Jordan decomposition is preserved for the general linear Lie algebra over a field of prime characteristic.

\begin{Remark}
In characteristic $p$, any matrix $A$ is decomposed into a semisimple matrix $S$ and a nilpotent matrix $N$ such that $[S,N] = 0$.

\end{Remark}

In other words, if we change the condition of \( d \) from being diagonalisable to being semisimple, the Jordan decomposition of the Lie algebra is preserved. We gain the insight that the notion of semisimple is much weaker than diagonalisable and works comparatively well in a prime characteristic field.

\section{Irreducible Representations of Lie Algebras over Field of Prime Characteristic}\label{RepLie}

Representations of semisimple Lie algebras have interesting properties. Note that \( \mathfrak{sl}(2, F) \) is semisimple, particularly \( \mathfrak{sl}(2, \mathbb{C}) \). Irreducible representations of \( \mathfrak{sl}(2, \mathbb{C}) \) are worth discussing, as representations of \( \mathfrak{sl}(2, F) \) where \( F \) has prime characteristic cannot be applied to this discussion.

\subsection{Representations of $\mathfrak{sl}(2, \mathbb{C})$}\label{Repsl2}

This subsection refers to \cite[Chapter 11, Part II]{FH}.

Recall that \( \mathfrak{sl}(2, \mathbb{C}) \) is generated by \( e \), \( f \), and \( h \) satisfying \( [e,f] = h \), \( [h,e] = 2e \), and \( [h,f] = -2f \). Let \( V \) be an irreducible finite-dimensional representation of \( \mathfrak{sl}(2, \mathbb{C}) \). By the preservation of Jordan decomposition \ref{Jord}, the action of \( h \) on \( V \) is diagonalizable. This implies that \( V \) has a decomposition 
$$ V = \bigoplus_{\alpha \in \Lambda(h)} V_{\alpha}, $$
where \( V_{\alpha} \) are irreducible subspaces, and \( \alpha \in \mathbb{C} \) is the eigenvalue of \( h \) for any vector \( v \in V_{\alpha} \). That is,
$$ h(v) = \alpha \cdot v \quad \text{for any} \quad v \in V_{\alpha}. $$

Since we know how \( h \) acts on \( V_{\alpha} \), it is useful to examine how \( e \) and \( f \) act on all of the summands \( V_{\alpha} \). In fact, \( e \) and \( f \) act on \( V_{\alpha} \) in such a way that they carry eigenvectors in \( V_{\alpha} \) to other subspaces \( V_{\alpha'} \). In other words, the image of a given vector \( v \in V_{\alpha} \) under the action of \( e \) lies in another summand of the decomposition.

To see how \( h \) acts on \( e(v) \), we compute
$$ h(e(v)) = e(h(v)) + [h,e](v). $$ 
Substituting \( h(v) = \alpha \cdot v \) and \( [h,e] = 2e \), we get
$$ h(e(v)) = e(\alpha \cdot v) + 2e(v) = (\alpha + 2) \cdot e(v). $$
Thus, if \( v \) is an eigenvector for \( h \) with eigenvalue \( \alpha \), then \( e(v) \) is an eigenvector for \( h \) with eigenvalue \( \alpha + 2 \). Therefore, \( e \) maps \( V_{\alpha} \) to \( V_{\alpha + 2} \), i.e., \( e: V_{\alpha} \to V_{\alpha + 2} \).

Similarly, the action of \( f \) on \( v \in V_{\alpha} \) is computed as
$$ h(f(v)) = f(h(v)) + [h,f](v), $$ 
and since \( [h,f] = -2f \), we have
$$ h(f(v)) = f(\alpha \cdot v) - 2f(v) = (\alpha - 2) \cdot f(v). $$ 
Thus, for any eigenvector \( v \) for \( h \) with eigenvalue \( \alpha \), \( f(v) \) is an eigenvector for \( h \) with eigenvalue \( \alpha - 2 \). In other words, \( f: V_{\alpha} \to V_{\alpha - 2} \).

\begin{figure}[h]
\includegraphics[width=\linewidth]{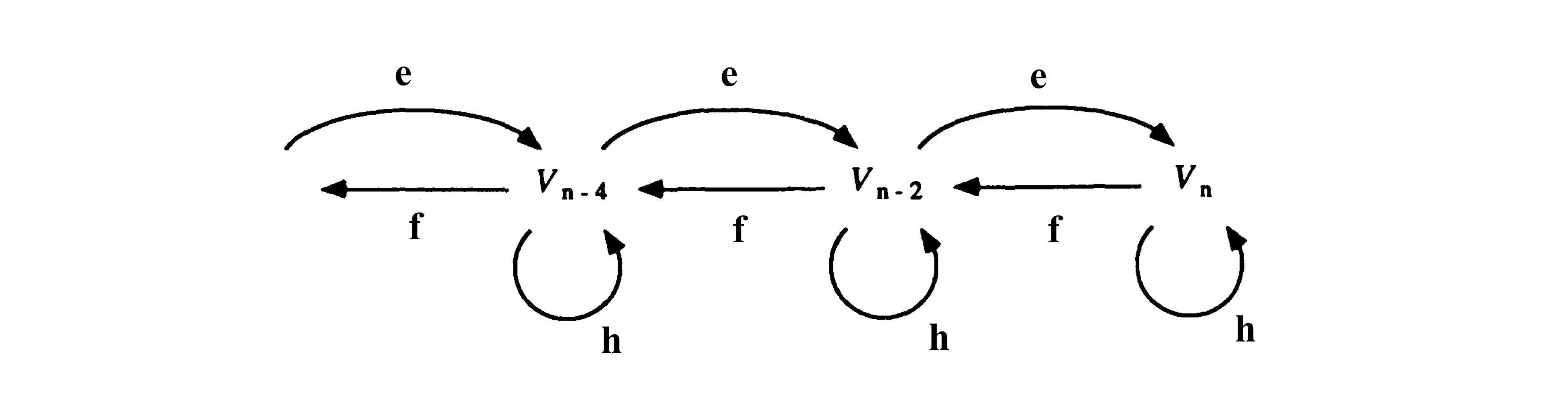}
\centering
\caption{The action of $e$, $h$, and $f$ on the eigenspaces $\{V_{\alpha}\}$.}
\label{figehf}
\end{figure}

From this, and by the irreducibility of \( V \) over \( \mathbb{C} \), all \( V_{\alpha} \) are congruent to each other modulo \( 2 \). Let \( \alpha_0 \) be an actual eigenvalue. Then 
$$ \bigoplus_{n \in \mathbb{Z}} V_{\alpha_0 + 2n} $$
is invariant under \( \mathfrak{sl}(2, \mathbb{C}) \) and is equal to \( V \).

With this observation, there exists a sequence of the form \( \beta, \beta + 2, \cdots, \beta + 2k \), such that
$$ \bigoplus_{0 \leq i \leq k} V_{\beta + 2i} $$
is a subspace of \( V \). Since \( V \) is finite-dimensional, the sequence terminates, so let \( n = \beta + 2k \) be the last element of this sequence.

Choose an eigenvector \( v \in V_n \). Then \( V_{n+2} = (0) \), so \( e(v) = 0 \). This implies that \( e \) cannot carry \( v \) to another eigenspace any further.

Note that the action of \( f \) on \( v \) goes backwards. Consider the set \( \{ v, f(v), f^2(v), \cdots \} \). By the following proof, the set spans \( V \). Let \( W \) be a subspace of \( V \) spanned by the set \( \{ v, f(v), f^2(v), \cdots \} \). Then, \( f \) preserves the subspace \( W \) since it carries the vector \( f^k(v) \) into \( f^{k+1}(v) \). Clearly, since \( v \in V_n \),
$$ f^k(v) \in V_{n-2k}. $$
We also have
$$ h(f^k(v)) = (n - 2k) \cdot f^k(v). $$
This implies that \( h \) also preserves the subspace \( W \). Lastly, we need to check \( e(W) \subset W \). By the previous discussion, \( e(v) = 0 \in W \). Using the definition of the commutator, we have
$$ e(f(v)) = [e,f](v) + f(e(v)) $$
$$ \quad = h(v) + f(0) $$
$$ \quad = n \cdot v + 0 = n \cdot v. $$
Similarly, for \( f^2(v) \),
$$ e(f^2(v)) = [e,f](f(v)) + f(e(f(v))) $$
$$ \quad = h(f(v)) + f(n \cdot v) $$
$$ \quad = (n-2) \cdot f(v) + n \cdot f(v) = (n-2) \cdot f(v) + n \cdot v. $$

Thus, by induction, we have
$$ e(f^k(v)) = \left( n + (n-2) + (n-4) + \cdots + (n-2k+2) \right) \cdot f^{k-1}(v), $$
which simplifies to
$$ e(f^k(v)) = k(n-k+1) \cdot f^{k-1}(v). $$

Therefore, \( e \), \( f \), and \( h \) carry the subspace \( W \) into itself under the action of \( \mathfrak{sl}(2, \mathbb{C}) \), which implies that, by the irreducibility of \( V \), \( \{ v, f(v), f^2(v), \cdots \} \) spans \( V \).

Since \( V \) is finite-dimensional, there exists \( k \) such that \( f^k(v) = 0 \) for sufficiently large \( k \). Let \( m \) be the smallest power of \( f \) that annihilates \( v \). Then, two observations follow. First, the set \( \{ v, f(v), f^2(v), \cdots, f^m(v) \} \) spans \( V \). Second, consider the equation in the above proof:
$$ 0 = e(f^m(v)) = m(n-m+1) \cdot f^{m-1}(v). $$ 
By the minimality of \( m \), \( f^{m-1}(v) \neq 0 \), so \( n - m + 1 = 0 \). This shows that \( n = \beta + 2k \) is a non-negative integer. Combining the two observations, the sequence \( \beta, \beta + 2, \cdots, \beta + 2k \) is a set of integers differing by 2 and symmetric about the origin in \( \mathbb{Z} \).

To summarize, for every non-negative integer \( n \), there is a unique representation \( V^{(n)} \), which is \( (n+1) \)-dimensional, with \( h \) having eigenvalues \( n, n-2, \cdots, -n+2, -n \).

It is desirable to see how some of the standard representations of \( \mathfrak{sl}(2, \mathbb{C}) \) are formed. The trivial one-dimensional representation of \( \mathbb{C} \) is \( V^{(0)} \). Consider the standard representation of \( \mathfrak{sl}(2,\mathbb{C}) \) on \( V = \mathbb{C}^2 \). If \( x \) and \( y \) are the standard basis for \( \mathbb{C}^2 \), then \( h(x) = x \) and \( h(y) = -y \). Therefore, \( V \) can be decomposed into weight spaces, i.e.,
$$ V = \mathbb{C} \cdot x \oplus \mathbb{C} \cdot y = V_{-1} \oplus V_1, $$
which is the representation \( V^{(1)} \) above. Similarly, consider the symmetric square \( W = \Sym^2 V = \Sym^2 \mathbb{C}^2 \). A basis for the symmetric square \( W \) is \( \{ x^2, xy, y^2 \} \), and thus
$$ h(x \cdot x) = x \cdot h(x) + h(x) \cdot x = 2x \cdot x, $$
$$ h(x \cdot y) = x \cdot h(y) + h(x) \cdot y = 0, $$
$$ h(y \cdot y) = y \cdot h(y) + h(y) \cdot y = -2y \cdot y. $$

This shows that the representation
$$ W = \mathbb{C} \cdot x^2 \oplus \mathbb{C} \cdot xy \oplus \mathbb{C} \cdot y^2 = W_{-2} \oplus W_0 \oplus W_2 $$
is the representation \( V^{(2)} \) above. The \( n \)-th symmetric power \( \Sym^n V \) of \( V \), with a basis \( \{ x^n, x^{n-1}y, \cdots, y^n \} \), has the representation \( V^{(n)} \) above. In fact,
$$ h(x^{n-k}y^k) = (n-k) \cdot h(x) \cdot x^{n-k-1} y^k + k \cdot h(y) \cdot x^{n-k} y^{k-1} $$
$$ = (n-2k) \cdot x^{n-k} y^k. $$

Thus, the eigenvalues of \( h \) on \( \Sym^n V \) are exactly \( n, n-2, \cdots, -n \). This implies that \( V^{(n)} = \Sym^n V \). In summary, any irreducible representation of \( \mathfrak{sl}(2, \mathbb{C}) \) is a symmetric power of the standard representation \( V \cong \mathbb{C}^2 \).

Irreducible representations of \( \mathfrak{sl}(2, \mathbb{C}) \) have the above properties, but those of \( \mathfrak{sl}(2, F) \), where \( F \) has characteristic \( p \), do not. For a counterexample, see \ref{sl2weyl}.

\section{Weyl's Theorem on Complete Reducibility over Field of Prime Characteristic}\label{Weyl}\index{Weyl's Theorem on Complete Reducibility}

Weyl's Theorem on complete reducibility is the following:

\begin{Theorem}[Weyl's Theorem]\index{Weyl's Theorem}

Let $L$ be a semisimple Lie algebra over $F$ where its characteristic is zero. Then every finite-dimensional representation of $L$ is completely reducible\index{completely reducible}.

\end{Theorem}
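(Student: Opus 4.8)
The plan is to prove the theorem by the classical Casimir-operator argument, organized as a reformulation, two reductions, and the construction of a distinguished $L$-module endomorphism. The reformulation: a finite-dimensional $L$-module $V$ is completely reducible if and only if every $L$-submodule $W\subseteq V$ admits an $L$-invariant complement, the converse direction following by induction on $\dim V$ (split off an irreducible submodule and apply the hypothesis to its complement). So it suffices to show that every short exact sequence of finite-dimensional $L$-modules $0\to W\to V\to V/W\to 0$ splits as a sequence of $L$-modules.

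Next I would perform two reductions. To reduce to $\dim(V/W)=1$, pass to the $L$-module $\operatorname{Hom}_F(V,W)$ with action $(x\cdot g)(v)=x\cdot g(v)-g(x\cdot v)$: inside it, the subspace $\mathcal V$ of maps whose restriction to $W$ is a scalar multiple of the identity contains the submodule $\mathcal W$ of maps vanishing on $W$, with $\dim(\mathcal V/\mathcal W)=1$; an $L$-invariant complement of $\mathcal W$ in $\mathcal V$ is spanned by a surjective $L$-module map $V\to W$ that restricts to the identity on $W$, and its kernel is the complement we want. Within the codimension-one case one may further assume $W$ is irreducible, again by induction on $\dim W$ (split off a proper submodule of $W$ inside $V$, then recombine through the quotient). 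Finally, writing $\phi\colon L\to\gl(V)$ for the representation and replacing $L$ by $L/\ker\phi$, which is again semisimple, one may assume $\phi$ is faithful; if $\phi=0$ the sequence splits trivially, so assume $L\neq 0$ and $\phi(L)\neq 0$.

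The heart of the argument is the Casimir operator. Since $L\cong\phi(L)\subseteq\gl(V)$ is a nonzero semisimple linear Lie algebra, the trace form $\beta(x,y)=\tr(\phi(x)\phi(y))$ is non-degenerate: its radical is an ideal of $L$ that is solvable by Cartan's criterion \ref{cc}, hence zero. Choosing bases $\{x_i\}$ and $\{y_i\}$ of $L$ dual with respect to $\beta$, set $c_\phi=\sum_i\phi(x_i)\phi(y_i)\in\End(V)$. Routine computations give: (a) $c_\phi$ commutes with every $\phi(x)$, by invariance of $\beta$, so $c_\phi$ is an $L$-module endomorphism of $V$; (b) $\tr(c_\phi)=\dim L\neq 0$; and (c) $L$ acts trivially on the one-dimensional module $V/W$, since that action is given by a functional vanishing on $[L,L]=L$, so $c_\phi$ induces the zero map on $V/W$ and $c_\phi(V)\subseteq W$. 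If $L$ acted trivially on $W$ as well then $W$, being irreducible, would be one-dimensional and $\phi(L)$ would consist of strictly upper-triangular $2\times 2$ matrices, forcing $\phi(L)=[\phi(L),\phi(L)]=0$, a contradiction; hence $W$ is a nontrivial irreducible module, $c_\phi|_W$ is a nonzero element of the division algebra $\End_L(W)$ by (b)--(c), and so invertible. Therefore $c_\phi$ maps $V$ onto $W$ with kernel meeting $W$ trivially, so $\ker c_\phi$ is a one-dimensional $L$-submodule and $V=W\oplus\ker c_\phi$.

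I expect stage three to be the main obstacle: the non-degeneracy of $\beta$ and the precise behaviour of $c_\phi$ must be pinned down carefully, non-degeneracy resting on Cartan's criterion together with semisimplicity, and property (c) again invoking $L=[L,L]$. By contrast, the reformulation and the two reductions are essentially formal manipulations with short exact sequences and $\operatorname{Hom}$-modules of $L$-modules.
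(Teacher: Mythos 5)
The paper does not actually prove Weyl's theorem: consistent with its stated policy of omitting proofs of the classical results, the theorem is stated bare and the reader is referred to the literature, so there is no in-paper argument to compare yours against. Your proposal is the standard Casimir--Whitehead proof (the one in Humphreys \S 6 and in Erdmann--Wildon), and as a sketch it is correct and complete: the reformulation via splittings, the $\operatorname{Hom}_F(V,W)$ trick reducing to codimension one, the induction reducing to $W$ irreducible, the passage to a faithful representation of the semisimple quotient $L/\ker\phi$, and the Casimir operator $c_\phi$ with $\tr(c_\phi)=\dim L\neq 0$ all fit together as you describe. The points you flag as delicate are exactly the right ones, and both are sound: non-degeneracy of $\beta(x,y)=\tr(\phi(x)\phi(y))$ follows because its radical is an ideal on which the trace form of $\phi$ vanishes identically, hence solvable by Cartan's criterion (Theorem \ref{cc}) and therefore zero by semisimplicity; and the triviality of the action on the one-dimensional quotient uses $[L,L]=L$. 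Two small remarks: since the field is only assumed to have characteristic zero (not to be algebraically closed), your appeal to Schur's lemma correctly yields only that $\End_L(W)$ is a division algebra, which is all that is needed to invert the nonzero endomorphism $c_\phi|_W$; and your separate treatment of the case where $L$ acts trivially on $W$ can alternatively be dispatched by noting that trivial actions on both $W$ and $V/W$ would force $c_\phi=0$, contradicting $\tr(c_\phi)=\dim L\neq 0$. Either way the argument closes, so the only thing missing relative to a full proof is the routine verification of the invariance computations you label (a)--(c).
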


However, Weyl's theorem does not hold in a field of positive characteristic. This phenomenon is related to the fact that in a prime characteristic field, all calculations are reduced modulo \( p \). The following refers to \cite{DM} and \cite{R}.

Before looking at a counterexample of Weyl's theorem in Lie algebras, we can first examine Weyl's theorem in the context of groups.

\begin{Theorem}[Maschke's theorem]\index{Maschke's theorem}\label{MT}
Let $V$ be a representation of a finite group $G$ over a field $F$ with characteristic not dividing $|G|$. If $W$ is a subrepresentation of $V$, then $V=W\oplus U$ for some other subrepresentation $U$.

\end{Theorem}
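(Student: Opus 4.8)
The plan is to prove Maschke's theorem by the standard averaging argument, which works precisely because $|G|$ is invertible in $F$. First I would choose, using finite-dimensionality of $V$, an $F$-linear projection $\pi : V \to W$ (so $\pi|_W = \mathrm{id}_W$ and $\mathrm{im}\,\pi = W$); such a $\pi$ exists simply by extending a basis of $W$ to a basis of $V$, but it need not be a $G$-map. The idea is to repair this defect by averaging over the group: define
\[
\tilde{\pi} = \frac{1}{|G|} \sum_{g \in G} g \circ \pi \circ g^{-1},
\]
where $g$ denotes the action of $g$ on $V$. Here the hypothesis that $\charr F$ does not divide $|G|$ is exactly what makes the scalar $1/|G| \in F$ meaningful, and this is the one place the characteristic condition is used.

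Next I would verify the three properties that make $\tilde\pi$ the projection we want. First, $\tilde\pi$ maps $V$ into $W$: since $W$ is a subrepresentation it is $g$-stable, so $\pi \circ g^{-1}$ lands in $W$ and then $g$ keeps it in $W$. Second, $\tilde\pi$ restricts to the identity on $W$: for $w \in W$ we have $g^{-1}w \in W$, so $\pi(g^{-1}w) = g^{-1}w$, hence $g\,\pi\,g^{-1}(w) = w$, and averaging $|G|$ copies of $w$ and dividing by $|G|$ gives $w$. Third, $\tilde\pi$ is a $G$-homomorphism: for $h \in G$ one computes $h \circ \tilde\pi \circ h^{-1} = \frac{1}{|G|}\sum_{g} (hg)\circ \pi \circ (hg)^{-1}$, and reindexing the sum by $g' = hg$ shows this equals $\tilde\pi$, so $h \circ \tilde\pi = \tilde\pi \circ h$.

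Finally I would set $U = \ker \tilde\pi$ and check that $V = W \oplus U$ as representations. Since $\tilde\pi$ is a $G$-map, $U$ is a subrepresentation. Since $\tilde\pi$ is idempotent (from $\tilde\pi|_W = \mathrm{id}$ and $\mathrm{im}\,\tilde\pi = W$ one gets $\tilde\pi^2 = \tilde\pi$), every $v \in V$ decomposes as $v = \tilde\pi(v) + (v - \tilde\pi(v))$ with $\tilde\pi(v) \in W$ and $v - \tilde\pi(v) \in U$; and $W \cap U = 0$ because $w \in U$ forces $w = \tilde\pi(w) = 0$. Hence $V = W \oplus U$ as $F[G]$-modules, which is the claim.

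The only genuine obstacle is conceptual rather than computational: one must be careful that $\pi$ is merely $F$-linear to begin with and that \emph{every} use of division by $|G|$ is legitimate in $F$ — this is where the excerpt's emphasis on the characteristic of the base field becomes the crux. All the remaining steps are short index-shuffling verifications with no hidden difficulty.
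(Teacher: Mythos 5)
Your proof is correct: it is the standard averaging argument, and every step (existence of the $F$-linear projection $\pi$, invertibility of $|G|$ in $F$, $G$-equivariance of $\tilde{\pi}$ by reindexing the sum, and the splitting $V = W \oplus \ker\tilde{\pi}$) is carried out properly, with the hypothesis $\charr F \nmid |G|$ used exactly where it is needed. The paper itself states Maschke's theorem without proof, using it only to deduce complete reducibility over $\mathbb{C}$, so there is no argument in the text to compare against; your averaging proof is the canonical one.
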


\begin{Theorem}
If $G$ is a finite group over $\mathbb{C}$, then $G$ acts completely reducibly.
\end{Theorem}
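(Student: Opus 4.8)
The plan is to derive this as an immediate consequence of Maschke's theorem (Theorem~\ref{MT}) together with an induction on dimension. First I would observe that $\charr \mathbb{C} = 0$, and that every finite group $G$ has $|G| \ge 1 \ne 0$; under the usual convention that $0 \mid m$ holds only when $m = 0$, this means the characteristic of $\mathbb{C}$ does not divide $|G|$. Hence the hypothesis of Theorem~\ref{MT} is satisfied for \emph{every} finite-dimensional representation $V$ of $G$ over $\mathbb{C}$, with no restriction on $G$.

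Next I would unwind the meaning of ``$G$ acts completely reducibly'': every finite-dimensional representation of $G$ decomposes as a direct sum of irreducible subrepresentations. I would prove this by strong induction on $n = \dim_{\mathbb{C}} V$. The base cases $n \le 1$ are immediate: a $0$-dimensional representation is the empty direct sum, and a $1$-dimensional one is automatically irreducible. For the inductive step, let $\dim_{\mathbb{C}} V = n \ge 2$. If $V$ is already irreducible we are done. Otherwise there is a subrepresentation $W$ with $0 \subsetneq W \subsetneq V$, and Maschke's theorem applied to this $W$ yields a subrepresentation $U$ with $V = W \oplus U$. Both $W$ and $U$ have dimension strictly less than $n$, so by the inductive hypothesis each is a direct sum of irreducible subrepresentations; concatenating the two decompositions exhibits $V$ itself as such a direct sum.

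I do not expect a genuine obstacle here: the whole substance of the statement is already contained in Theorem~\ref{MT}, and the rest is bookkeeping. The one point worth keeping in view is that the standing convention of this project --- all vector spaces are finite-dimensional --- is precisely what guarantees the dimension induction terminates; without it one would need a separate argument (or a transfinite one) to reach irreducible pieces. The only mild subtlety to state explicitly, if one wishes to be scrupulous, is the remark above that ``characteristic $0$ does not divide $|G|$'' is legitimate because $|G| \ge 1$, so that Maschke's hypothesis genuinely applies to $\mathbb{C}$.
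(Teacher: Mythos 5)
Your proposal is correct and takes essentially the same route as the paper, which simply states that the result follows immediately from Maschke's theorem~\ref{MT}; you have merely spelled out the standard induction on dimension that the paper leaves implicit. The observations about characteristic $0$ not dividing $|G|$ and about finite-dimensionality guaranteeing termination are both sound.
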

\begin{proof}
This immediately follows from Maschke's theorem \ref{MT}.
\end{proof}

However, Maschke's theorem is not true if the base field $\mathbb{C}$ is replaced by $\overline{\mathbb{F}_p}$, as demonstrated by the following examples.

\begin{Example}
\item[1.]
Let $F=\overline{\mathbb{F}_p}$. Consider the cyclic group $C_p$ of order $p$, where the generator is $\sigma$. Let $V=F^2$ be a vector space over $F$, and suppose that $\sigma$ acts on $V$ by the matrix
$$
\begin{pmatrix}
 1 & 1 \\
 0 & 1
\end{pmatrix}.
$$
Then the representation $\rho: \overline{\mathbb{F}_p} C_p \rightarrow \gl(F^2)$ is reducible.

\item[2.]
Consider the natural action of \( G = SO(V) \) on an odd-dimensional \( F \)-vector space \( V \), where \( \operatorname{char} F = 2 \).
\end{Example}

This shows that Weyl's theorem does not hold over a prime characteristic field in the context of groups. In fact, the theorem does not hold over a base field of prime characteristic in Lie algebras.

\begin{Theorem}
If $L$ is a semisimple Lie algebra over $\mathbb{C}$, then Weyl's theorem implies $L$ is completely reducible.

\end{Theorem}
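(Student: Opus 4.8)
The plan is to recognise that this statement is simply the specialisation of Weyl's theorem to the base field $F = \mathbb{C}$, in exact parallel with the way ``if $G$ is a finite group over $\mathbb{C}$ then $G$ acts completely reducibly'' was obtained as an immediate consequence of Maschke's theorem \ref{MT}. The first step is to unwind the terminology: to say that the semisimple Lie algebra $L$ is \emph{completely reducible} means that every finite-dimensional representation $\rho \colon L \to \gl(V)$ decomposes as a direct sum of irreducible subrepresentations, equivalently that every $L$-submodule of $V$ admits an $L$-invariant complement.

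The second step is to verify the hypotheses of Weyl's theorem: $L$ is semisimple by assumption, and the base field $\mathbb{C}$ has characteristic zero. Weyl's theorem, as stated at the beginning of Section~\ref{Weyl}, then yields precisely the desired conclusion, so the argument is a one-line deduction and there is essentially no obstacle to overcome.

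The only point I would emphasise is that this is exactly the place where the characteristic-zero hypothesis is indispensable. The counterexamples in Section~\ref{Weyl} --- the cyclic group $C_p$ acting by a unipotent matrix over $\overline{\mathbb{F}_p}$, and the Lie-algebra counterexample referenced there --- show that the analogous assertion is false over a field of prime characteristic, because the averaging argument underlying Weyl's theorem requires dividing by integers that can vanish modulo $p$. If one preferred not to quote Weyl's theorem as a black box, the alternative would be to run Weyl's unitarian trick directly: pass to a compact real form of $L$, average an arbitrary inner product over the associated compact Lie group to obtain an invariant Hermitian form, take orthogonal complements of subrepresentations, and complexify back. Since the theorem is already available in this chapter, however, that detour is unnecessary.
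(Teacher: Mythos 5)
Your proposal is correct and matches the paper's (implicit) argument exactly: the paper states this theorem without proof as an immediate specialisation of Weyl's theorem to the characteristic-zero field $\mathbb{C}$, in direct parallel with its deduction of the group-theoretic statement from Maschke's theorem \ref{MT}. Your unwinding of ``completely reducible'' and your verification of the hypotheses supply precisely the one-line deduction the paper intends; the remarks on the unitarian trick are a correct but optional addition.
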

However, this theorem is not true if $\mathbb{C}$ is replaced by $\overline{\mathbb{F}_p}$, as shown by the counterexample $\mathfrak{sl}(2, \overline{\mathbb{F}_3})$.

Recall that $\mathfrak{sl}(2, F) = \langle e, f, h \mid [e,f] = h, [h,e] = 2e, [h,f] = -2f \rangle$.

\begin{Example}\label{sl2weyl}
 
 Consider $\mathfrak{sl}(2, F)$ as a Lie algebra acting on a submodule $V_2$ defined below, in both cases where $F$ is $\mathbb{C}$ and where $F$ has characteristic $3$. In other words, we want to examine representations of $\mathfrak{sl}(2,F)$ for $F=\mathbb{C}$ and for $F$ with characteristic $3$. Let $V_2$ be a 2-dimensional module generated by $x$ and $y$. Then a basis for $\Sym^3 V_2$ is given by $(x^3, x^2 y, x y^2, y^3)$. Define the action of $e$ as $x \cdot \partial_y$,
$$ x \frac{\partial}{\partial y}, $$
and the action of $f$ as $y \cdot \partial_x$,
$$ y \frac{\partial}{\partial x}, $$
so that the action of $e$ carries each basis element to its left basis element, and the action of $f$ carries each basis element to its right basis element, when we view the basis as an ordered set.

\item[1.]
Over the base field $\mathbb{C}$, $\Sym^3 V_2$ is completely reducible by \ref{Repsl2}.

\item[2.]

Consider $\mathfrak{sl}(2, F)$ where $F$ has characteristic 3. Then the actions of $e$ and $f$ on $x^3$ are both zero, 
$$\text{i.e.,}\quad e(x^3) = 0 \quad \text{and} \quad f(x^3) = 3x^2 y r = 0.$$
Since $h = [e, f]$, we have
$$h(x^3) = 0.$$
Likewise, the actions of $e$, $f$, and $h$ on $y^3$ are all zero. Hence, 
$$U := \langle x^3, y^3 \rangle$$
is a submodule of $\Sym^3 V_2$. However, there is no complement submodule of $U$ in $\Sym^3 V_2$. 

If $W$ is any complement of $U$, then $\Sym^3 V_2 = U \oplus W$, and thus $x^2 y \in W$. Since $W$ is a submodule, it is closed under the action of $\mathfrak{sl}(2, F)$. However,
$$e(x^2 y) = x^3 \in U,$$
which leads to a contradiction.

Therefore, there is no complement of $U$ in $\Sym^3 V_2$, and this implies that $\Sym^3 V_2$ is not completely reducible. Hence, Weyl's theorem does not hold over a field of characteristic 3.

\end{Example}

Weyl's theorem does not guarantee complete reducibility over fields of prime characteristic. To address these limitations, concepts such as Weyl modules and Lusztig's conjecture have been developed to study the structure of representations in this setting.

\subsection{Weyl modules and Lusztig's Conjecture}

For all the details in this section, see \cite[Chapter 3]{HJWMLC}.

Let $G$ be a reductive group.\index{reductive group} For simple $G$-modules with a given highest weight $\lambda \in X^+$, consider the simple module $V(\lambda)_{\mathbb{C}}$ for the corresponding semisimple Lie algebra $\mathfrak{g}_{\mathbb{C}}$ over $\mathbb{C}$. There exists a distinguished $\mathbb{Z}$-form $V(\lambda)_{\mathbb{Z}} \subset V(\lambda)_{\mathbb{C}}$, which allows reduction modulo $p$. Consequently, $G$ acts naturally on the module $F \otimes V(\lambda)_{\mathbb{Z}}$, where $\operatorname{char} F = p$.

\begin{Definition}

A \textit{Weyl module}\index{Weyl module} is a module $V(\lambda)$ defined as $V(\lambda) := F \otimes V(\lambda)_{\mathbb{Z}}$.

\end{Definition}

Unlike a simple module $V(\lambda)_{\mathbb{C}}$ over $\mathbb{C}$, the module $V(\lambda)$ is not guaranteed to be simple. However, for Weyl modules in characteristic $p$, their characters remain unchanged under reduction modulo $p$, and therefore can still be computed using Weyl's character formula\index{Weyl's character formula}. In other words, Weyl's character formula holds for Weyl modules even over fields of prime characteristic.

Define the character of the Weyl module by $\chi(\lambda) := \operatorname{char} V(\lambda)$ and the character of its unique simple quotient $L(\lambda)$ by $\chi_p(\lambda) := \operatorname{char} L(\lambda)$. Then the two characters are related by the following expression:
\[
\chi_p(\lambda) = \sum_{\mu \leq \lambda} \alpha_{\mu} \chi(\mu), \quad \text{with } \alpha_{\mu} \in \mathbb{Z} \text{ and } \alpha_{\lambda} = 1.
\]

The unknown coefficients $\alpha_{\mu}$ are the central focus of Lusztig's conjecture. For suitably large $p$ and appropriate $\lambda$, the conjecture proposes explicit values for these coefficients.

\begin{Conjecture}[Lusztig's Conjecture]\index{Lusztig's Conjecture}

Assume that $p \geq h$, where $h$ is the Coxeter number\index{Coxeter number} of the Weyl group $W$. Let $w \in W_p$, where $W_p$ is the affine Weyl group\index{affine Weyl group}, and suppose $w$ satisfies the Jantzen condition\index{Jantzen condition}. If $-w \cdot 0$ is dominant\index{dominant}, then the character $\chi_p(-w \cdot 0)$ of the corresponding simple module is given by

\[
\chi_p(-w \cdot 0) = \sum_{\substack{y \leq w \\ -y \cdot 0\ \text{dominant}}} (-1)^{\ell(w) - \ell(y)} P_{y,w}(1) \chi(-y \cdot 0),
\]

where $P_{y,w}(q)$ are the Kazhdan–Lusztig polynomials and $\ell$ denotes the length function on $W_p$. The sum is taken over those $y \in W_p$ such that $-y \cdot 0$ is dominant.

\end{Conjecture}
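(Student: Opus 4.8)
The plan is to follow the strategy that eventually established Lusztig's conjecture for all sufficiently large $p$: a chain of category equivalences carrying modular representations of $G$ first to quantum groups at a root of unity and then to affine Kac--Moody algebras, where the multiplicities can be computed geometrically. First I would use the linkage principle together with Jantzen's translation functors to reduce to the principal block and to weights lying in the region covered by the Jantzen condition, so that it suffices to determine $\chi_p(-w\cdot 0)$ for the finitely many relevant $w \in W_p$; this is a matter of the structure theory of $G$-modules and Jantzen filtrations already alluded to above, and it is the routine part.

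The conceptual heart is the Andersen--Jantzen--Soergel theorem: for $p$ larger than an explicit but very large bound depending on the root system, the category of $G_1T$-modules relevant here is equivalent, via reduction arguments and Steinberg's tensor product theorem, to a category of modules over Lusztig's quantum group $U_\zeta$ with $\zeta$ a root of unity of order $p$. Under this equivalence Weyl modules correspond to Weyl modules and simple modules to simple modules, so the integers $\alpha_\mu$ are literally the same on both sides; the characteristic-$p$ problem has been traded for a characteristic-zero problem about $U_\zeta$.

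Third, I would invoke the Kazhdan--Lusztig tensor equivalence between a suitable category of $U_\zeta$-modules and a category of modules of negative level over the affine Kac--Moody algebra $\widehat{\mathfrak{g}}$, together with the Kazhdan--Lusztig--Kashiwara--Tanisaki theorem, which identifies the composition multiplicities in the corresponding parabolic category $\mathcal{O}$ with the values $P_{y,w}(1)$ of Kazhdan--Lusztig polynomials for $W_p$. This last theorem is proved by Beilinson--Bernstein localisation on the affine flag variety and the decomposition theorem, exactly as in the geometric proof of the original Kazhdan--Lusztig conjecture, with intersection cohomology of affine Schubert varieties producing the polynomials $P_{y,w}(q)$. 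Composing the three equivalences and inverting the resulting unipotent base-change matrix between the bases $\{\chi(\mu)\}$ and $\{\chi_p(\mu)\}$ yields the stated alternating sum, with the sign $(-1)^{\ell(w)-\ell(y)}$ arising from Euler characteristics.

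The main obstacle is the bound on $p$. The hypotheses $p \ge h$ and the Jantzen condition are not by themselves enough to make the Andersen--Jantzen--Soergel comparison valid; in fact Williamson's counterexamples show that the conjecture as literally stated here is false, with the genuine threshold growing faster than any polynomial in $h$. Thus the honest outcome of the above program is a proof of the conjecture only after replacing ``$p \ge h$'' by ``$p$ sufficiently large'', and determining the correct bound --- now understood through $p$-Kazhdan--Lusztig polynomials and the Riche--Williamson theory of the Hecke category --- is precisely the part that remains genuinely hard.
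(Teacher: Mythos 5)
The paper does not prove this statement, nor could it: it is presented as a \emph{Conjecture}, stated without proof and with a pointer to the literature for background. There is therefore no ``paper's own proof'' to compare against, and your submission should be judged on whether it actually establishes the displayed character formula under the stated hypotheses ($p \geq h$ and the Jantzen condition). It does not. What you have written is an accurate survey of the programme that was pursued in the literature --- reduction to the principal block via translation functors, the Andersen--Jantzen--Soergel comparison with quantum groups at a root of unity, the Kazhdan--Lusztig tensor equivalence with negative-level representations of the affine Kac--Moody algebra, and the geometric computation of the multiplicities as $P_{y,w}(1)$ via localisation and intersection cohomology --- but every genuinely hard step is invoked as a black box, and the chain only yields the formula for $p$ exceeding an enormous, inexplicit bound, not for $p \geq h$.

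More importantly, you concede in your final paragraph that the statement as formulated is \emph{false}: Williamson's counterexamples show that no bound polynomial in $h$ suffices. A proof attempt that ends by observing that the claim to be proved is false under its stated hypotheses is not a proof; it is a (correct) refutation plus a sketch of a weaker true theorem. That is valuable context, and it is consistent with the paper's decision to label the statement a conjecture rather than a theorem, but it should be presented as such: either as a remark that the conjecture fails for $p$ in the range $h \leq p \leq c(h)$ for the known counterexamples, or as a proof of the modified statement with ``$p$ sufficiently large'' replacing ``$p \geq h$'', with the caveat that the threshold is not effective in the form you need. As written, the proposal neither proves the statement nor cleanly replaces it with one it can prove.
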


\chapter{Modular Lie Algebras}

The previous chapter demonstrated that Lie algebras over a base field of prime characteristic exhibit fundamentally different properties compared to those over an algebraically closed field or field of characteristic zero. Lie algebras defined over a field of prime characteristic are referred to as \textit{modular Lie algebras}\index{modular Lie algebras}. As previously observed, many foundational theorems in Lie theory—such as Lie’s theorem, Cartan’s criterion, and Weyl’s theorem—do not apply in the modular setting. To study modular Lie algebras effectively, new mathematical tools must be introduced. These tools reveal structural properties that are unique to the modular context.

\section{$p$-Mappings and Restricted Lie Algebras} \label{Modular Lie Algebras}

The previous chapter indicates that Lie algebras over a field \( F \) with characteristic \( p \) require completely different tools. Therefore, a \textit{$p$-mapping} \( [p] \), defined by \( x \mapsto x^{[p]} \), is introduced. However, before introducing the \( p \)-mapping, it is necessary to establish general intuition. This insight is gained by considering the \( p \)-th power map\index{p-th power map}, defined by \( x \mapsto x^p \), in a Lie algebra \( L \) equipped with its commutator. The discussion refers to \cite[Chapter 2]{SF} and \cite[Section 7, Chapter 5]{N}.

\subsection{Basic Settings for $p$-Mappings}\label{Setpmap}

Recall that in \ref{algLiealg}, any associative $F$-algebra $A$ can be made into a Lie algebra $A_L$ with its commutator.

The basic setting in this section is that the base field $F$ has prime characteristic $p$.

\begin{Remark}\label{remp}
\item[(i)]\label{remi}
Let $A$ be an associative $F$-algebra where $\operatorname{char}F=p$.

First, let $x, y \in A$ and consider $(\ad x)^m(y)$ for $m \in \mathbb{N}$. Denote $L_a$ and $R_a$ as the left and right multiplication by $a \in A$, i.e., $L_a(b) = ab$ and $R_a(b) = ba$. Then
$$(\ad a)(b) = [a, b] = ab - ba = L_a(b) - R_a(b) = (L_a - R_a)(b).$$
Thus, 
$$ (\ad a) = L_a - R_a, \quad \forall a \in A. $$

Hence, 
$$ (\ad x)^m(y) = (L_x - R_x)^m(y) = \sum_{i=0}^{m} (-1)^{m-i} \binom{m}{i} L_x^i \cdot R_x^{m-i}(y) = \sum_{i=0}^{m} (-1)^{m-i} \binom{m}{i} x^i y x^{m-i}. $$

\begin{equation}\label{adxmy}
(\ad x)^m(y) = \sum_{i=0}^{m} (-1)^{m-i} \binom{m}{i} x^i y x^{m-i}.
\end{equation}

Substitute $m = p$ in the equation. Note that all of the middle terms vanish, since the binomial coefficients $\binom{p}{i} \equiv 0 \pmod{p}$ for $i = 1, \cdots, p-1$. Thus, 
$$(\ad x)^p(y) = x^p y - y x^p = (\ad x^p)(y).$$
Hence,
$$ \ad x^p = (\ad x)^p \quad \forall x \in A. $$

The identity indicates that the power of $p$ on the adjoint representation of $x$ is the adjoint representation of $x^p$. This implies that the $p$-th power map $x \mapsto x^p$ has a structural interrelation with $A$.

\item[(ii)]

Clearly, $(\alpha x)^p = \alpha^p x^p \;\; \forall\; \alpha \in F, \; x \in A$ due to $A$ being an associative $F$-algebra.

\item[(iii)]

Finally, consider $(x + y)^p$ for any $x, y \in A$. To expand this term by term, consider $(aX + b)^p \in A[X]$ where $X$ is an indeterminate over $A$. The polynomial $(aX + b)^p$ is written as 
$$
(aX + b)^p = a^p X^p + b^p + \sum_{i=1}^{p-1} s_i(a, b) X^i 
$$ 
where $s_i(a, b) \in A$.

\begin{equation}\label{axbp}
(aX + b)^p = a^p X^p + b^p + \sum_{i=1}^{p-1} s_i(a, b) X^i \quad \text{where } s_i(a, b) \in A.
\end{equation}

The expression denotes only $X^p$ and constant terms explicitly but leaves the remainder terms by denoting coefficients as $s_i(a, b), 1 \leq i \leq p-1$. 

By substituting $X = 1$ in this equation in order to find $(x + y)^p$ for any $x, y \in A$, we get
\begin{equation}\label{abab}
(a + b)^p = a^p + b^p + \sum_{i=1}^{p-1} s_i(a, b), \quad \forall a, b \in A.
\end{equation}

Note that each value $s_i(a, b)$ is not expressed explicitly. The values of $s_i(a, b)$ are uncertain. Thus, differentiate equation \ref{axbp} with respect to $X$. Then, we obtain
\begin{equation}\label{dif}
p a (aX + b)^{p-1} = p a^p X^{p-1} + \sum_{i=1}^{p-1} i s_i(a, b) X^{i-1}.
\end{equation}

Note that the left-hand side $p a (aX + b)^{p-1}$ is $0$ since the term has $p$ as a factor. Instead of writing the term as $0$, we can modify it using the following technique to express it as $(\ad (aX + b))^{p-1}(a)$ in equation \ref{adxmy}:
$$
p a (aX + b)^{p-1} = \sum_{i=0}^{p-1} a (aX + b)^{p-1} = \sum_{i=0}^{p-1} (aX + b)^i a (aX + b)^{p-1-i}.
$$

$ p a (aX + b)^{p-1} $ is obtained by simply adding $ a (aX + b)^{p-1} $ with $p$ times and $(aX + b)^{p-1} = (aX + b)^i (aX + b)^{p-1-i}$ for all $i = 1, \cdots, p-1$.

Note that $p a^p X^{p-1} = 0$.

Therefore, equation \ref{dif} becomes
\begin{equation}\label{sumaxb1}
\sum_{i=0}^{p-1} (aX + b)^i a (aX + b)^{p-1-i} = \sum_{i=1}^{p-1} i s_i(a, b) X^{i-1}.
\end{equation}

Note that $\binom{p-1}{i} = (-1)^i \; (\text{mod } p)$ holds for $i = 1, \dots, p-1$. Thus, equation \ref{sumaxb1} becomes

$$
\sum_{i=0}^{p-1} (-1)^i \binom{p-1}{i} (aX + b)^i a (aX + b)^{p-1-i} = \sum_{i=1}^{p-1} i s_i(a, b) X^{i-1}.
$$

By substituting $m = p-1$, $x = aX + b$, and $y = a$ into equation \ref{adxmy}, we get

$$
(\ad\; (aX + b))^{p-1}(a) = \sum_{i=0}^{p-1} (-1)^i \binom{p-1}{i} (aX + b)^i a (aX + b)^{p-1-i} = \sum_{i=1}^{p-1} i s_i(a, b) X^{i-1}.
$$

The result of this equation is

\begin{equation}\label{adaxb}
(\ad\; (aX + b))^{p-1}(a) = \sum_{i=1}^{p-1} i s_i(a, b) X^{i-1}.
\end{equation}

This shows that $i s_i(a, b)$ is the coefficient of $X^{i-1}$ in the polynomial $(\ad (aX + b))^{p-1}(a) \in A[X]$. This implies that the previously unknown values $s_i(a, b)$ can be determined from the above equation \ref{adaxb}.

In conclusion, 
$$
(a + b)^p = a^p + b^p + \sum_{i=1}^{p-1} s_i(a, b), \quad \forall a, b \in A
$$ 
where $s_i(a, b)$ is the value derived from $(\ad\; (aX + b))^{p-1}(a) = \sum_{i=1}^{p-1} i s_i(a, b) X^{i-1}$.

\end{Remark}

\textit{Remark} \ref{remi} is in the setting of associative $F$-algebra $A$. However, the basic setting of $p$-mapping is modular Lie algebras. Therefore, it is necessary to change the setting from the polynomial ring $A[X]$ to the Lie algebra polynomial ring $L \otimes_F F[X]$.

\begin{Remark}

First, it is possible to identify $A \otimes_F F[X]$ and $A[X]$ with the unique isomorphism $f: A \otimes_F F[X] \rightarrow A[X]$ such that $f(a \otimes X) = aX, \;\; \forall a \in A$. Then, the condition for $s_i(a, b)$ in equation \ref{adaxb} is characterized in $A \otimes_F F[X]$ as

\begin{equation}
(\ad \; (aX \otimes 1 + b \otimes 1))^{p-1}(a \otimes 1) = \sum_{i=1}^{p-1} i s_i(a, b) \otimes X^{i-1}.
\end{equation}

Next, given a Lie algebra $L$, $L \otimes_F F[X]$ has a Lie structure defined by
$$[a \otimes r, b \otimes s] = [a, b] \otimes rs \quad a, b \in L, \; r, s \in F[X].$$

Then, every $h \in L \otimes_F F[X]$ can be uniquely expressed as
$$h = \sum_{i=0}^{n} a_i \otimes X^i, \quad a_i \in L.$$

By the entire procedure, the $p$-th power map $\mathfrak{p}$ transforms into a map in the modular Lie algebra structure setting.

\end{Remark}

\subsection{Definition of $p$-Mappings}

In the previous section, given a Lie algebra $L$ with Lie multiplication $[X,Y]=XY-YX$ for all $X,Y \in L$, the $p$-th power map $\mathfrak{p}: L \rightarrow L$ defined by $x \mapsto x^p$ provides general intuition on how the $p$-mapping should be defined in the modular Lie algebra setting. The previous section justifies that the map $\mathfrak{p}: x \mapsto x^p$ is well-constructed within the modular Lie algebra structure.

With this insight, the $p$-mapping in the modular Lie algebra setting is defined below, as a generalized definition of the map $\mathfrak{p}: x \mapsto x^p$. Thus, the $p$-mapping holds all the discussions from the previous section and is defined in the same setting as the $p$-th power map $\mathfrak{p}$, except the $p$-mapping is for any modular Lie algebra with any Lie multiplication defined.

\begin{Definition}

Let $L$ be a Lie algebra over $F$ of $\operatorname{char} p$. A mapping $[p]: L \rightarrow L$ defined by $x \mapsto x^{[p]}$ is called a \textit{$p$-mapping}\index{p-mapping} if:

\begin{itemize}\label{pmap1}
    \item[1.] $\ad a^{[p]} = (\ad a)^p$ \; for all $a \in L$.
    
    \item[2.] $(\alpha a)^{[p]} = \alpha^p a^{[p]}$ \; for all $\alpha \in F$ and $a \in L$.
    
    \item[3.] $(a + b)^{[p]} = a^{[p]} + b^{[p]} + \sum^{p-1}_{i=1} s_i(a, b)$ where $(\ad(a \otimes X + b \otimes 1))^{p-1}(a \otimes 1) = \sum^{p-1}_{i=1} i s_i(a, b) \otimes X^{i-1}$ in $L \otimes_F F[X]$ for all $a, b \in L$.
\end{itemize}

\end{Definition}

\begin{Example}\label{glpmap}

In $\gl(n,F)$, the $p$-th power map $\mathfrak{p}: \gl(n,F) \rightarrow \gl(n,F)$ such that $x \mapsto x^p$ is a $p$-mapping. This is clear because the Lie multiplication is defined as $[X,Y] = XY - YX$ for any $X, Y \in \gl(n,F)$, the same as in the previous section \ref{Setpmap}.

\end{Example}

\begin{Definition}
Given a Lie algebra $L$ of characteristic $p$, a map $f: L \rightarrow L$ such that 
\[
f(a+b) = f(a) + f(b) \quad \text{for all } a,b \in L \quad \text{and} \quad f(\alpha a) = \alpha^p f(a) \quad \text{for all } \alpha \in F, \; a \in L
\]
is called a \textit{$p$-semi-linear mapping}\index{p-semi-linear mapping}.

\end{Definition}

\subsection{Definition of Restricted Lie Algebras}

\begin{Definition}

Let $L$ be a Lie algebra over $F$ with $\operatorname{char}p$. If $L$ has a $p$-mapping $[p]$, then $L$ is called a \textit{restricted Lie algebra}\index{restricted Lie algebra}.

\end{Definition}

\begin{Example}
Let $L = \langle x, y, z \; |\; [x,y] = z, [y,z] = 0, [z,x] = 0 \rangle$. Then the $p$-th power map is a $p$-mapping that satisfies $x^{[p]} = y^{[p]} = z^{[p]} = 0$.

In this case, the Lie subalgebra $\langle z \rangle$ is the centre of $L$. This example refers to \cite{BD}.
\end{Example}

\subsection{Uniqueness of $p$-mapping in Restricted Lie Algebras}

Note that $p$-mappings in a restricted Lie algebra are not necessarily unique. However, any 2-dimensional non-abelian Lie algebra has a unique $p$-mapping.

\begin{Corollary}

A 2-dimensional non-abelian Lie algebra has a unique $p$-mapping.

\end{Corollary}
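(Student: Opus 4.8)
The plan is to bring $L$ into a normal form, observe that its centre is trivial, and then invoke the principle that two $p$-mappings on the same Lie algebra can differ only by a $p$-semi-linear map into the centre. First I would recall that a $2$-dimensional non-abelian Lie algebra $L$ has $1$-dimensional derived subalgebra: for any basis $\{u,v\}$ the single bracket $[u,v]$ is non-zero and spans $[L,L]$. Writing $[L,L]=\langle x\rangle$ and completing $x$ to a basis $\{x,z\}$, we have $[x,z]=\gamma x$ with $\gamma\neq 0$ (else $[L,L]=0$), so replacing $z$ by $\gamma^{-1}z$ yields a basis $\{x,y\}$ with $[x,y]=x$. A one-line computation then shows that $w=ax+by$ has $[w,x]=-bx$ and $[w,y]=ax$, so $w$ is central only when $a=b=0$; hence the centre $Z(L)$ is $\{0\}$.

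For uniqueness, suppose $[p]_1$ and $[p]_2$ are $p$-mappings on $L$ and put $\delta(a):=a^{[p]_1}-a^{[p]_2}$. The first defining property gives $\ad(a^{[p]_i})=(\ad a)^p$, hence $\ad(\delta(a))=0$, i.e.\ $\delta(a)\in Z(L)$ for every $a$; the second gives $\delta(\alpha a)=\alpha^p\delta(a)$. For the third, the key observation is that the terms $s_i(a,b)$ are determined purely by the bracket of $L$ through the identity $(\ad(a\otimes X+b\otimes 1))^{p-1}(a\otimes 1)=\sum_{i=1}^{p-1} i\,s_i(a,b)\otimes X^{i-1}$ in $L\otimes_F F[X]$, so they are the same for $[p]_1$ and $[p]_2$; subtracting the two instances of the third property gives $\delta(a+b)=\delta(a)+\delta(b)$. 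Thus $\delta\colon L\to Z(L)$ is $p$-semi-linear with image in $\{0\}$, so $\delta\equiv 0$ and $[p]_1=[p]_2$.

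To see that the $p$-mapping exists (so that ``unique'' is not vacuous), realise $L$ inside $\gl(2,F)$ via $x=e_{12}$, $y=e_{22}$; then $[x,y]=e_{12}e_{22}-e_{22}e_{12}=e_{12}=x$, so this is the algebra above. A direct matrix computation gives $(ax+by)^p=ab^{p-1}x+b^py\in L$ for all $a,b\in F$, so $L$ is closed under the $p$-th power map of $\gl(2,F)$. Since that map is a $p$-mapping on $\gl(2,F)$ by~\ref{glpmap} and every quantity appearing in the three axioms (including the $s_i(a,b)$ for $a,b\in L$) lies in the subalgebra $L$, its restriction is a $p$-mapping on $L$, explicitly with $x^{[p]}=0$ and $y^{[p]}=y$.

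The substantive step is uniqueness: one must recognise that the only ambiguity in a $p$-mapping on a fixed restricted Lie algebra is the addition of a $p$-semi-linear map into the centre, which is exactly why a trivial centre forces uniqueness; pinning down the normal form, checking $Z(L)=\{0\}$, and exhibiting one $p$-mapping are routine. The one place to be careful is the third axiom in the uniqueness argument — the reason the difference is additive is precisely that the $s_i(a,b)$ are intrinsic to the bracket and independent of the $p$-mapping.
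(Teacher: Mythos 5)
Your proposal is correct, and on the one point that actually carries the weight of the statement --- uniqueness --- it is substantially more complete than the paper's own proof. The existence halves are essentially the same: both realise the algebra inside $\gl(2,F)$ (the paper via $h=e_{11}$, $x=e_{12}$, you via $x=e_{12}$, $y=e_{22}$) and verify by a direct power computation that $L$ is closed under the $p$-th power map, so that the restriction of the $p$-mapping of Example~\ref{glpmap} restricts to $L$. Where you genuinely diverge is that the paper simply asserts ``since $\mathfrak{p}$ is the only possible $p$-mapping, this map is the unique $p$-mapping,'' with no supporting argument, whereas you supply the missing reasoning: the first axiom forces $\ad\bigl(a^{[p]_1}-a^{[p]_2}\bigr)=(\ad a)^p-(\ad a)^p=0$, so any two $p$-mappings differ by a map into $Z(L)$, and your computation that $w=ax+by$ is central only when $a=b=0$ kills that difference. (Strictly, triviality of the centre already finishes the uniqueness argument; the verification that the difference is $p$-semi-linear, via the fact that the $s_i(a,b)$ depend only on the bracket, is not needed here, though it is the right general principle.) You also justify the reduction of an arbitrary $2$-dimensional non-abelian Lie algebra to the normal form $[x,y]=x$, which the paper takes for granted. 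In short: same existence argument, but your uniqueness argument is the one the corollary actually requires.
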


\begin{proof}

Let $L := \langle h, x \;|\; [h,x]=x \rangle$. Then $L$ induces a unique $p$-mapping by defining 
\[
z^{[p]} = (\alpha h + \beta x)^{[p]} := \alpha^p h + \alpha^{p-1} \beta x.
\]

Let's check this map is a $p$-mapping. Let $L$ be the Lie algebra generated by $h = e_{11}$ and $x = e_{12}$.
Then $L = Fh \oplus Fx$ satisfying $[h, x] = e_{11}e_{12} - e_{12}e_{11} = e_{12} = x$.

Let 
\[
z = \begin{pmatrix}
\alpha & \beta \\
0 & 0
\end{pmatrix} = \alpha h + \beta x \in L
\]
for some $\alpha, \beta \in F$. Clearly, $z^{[p]} = z^p$ by the following.

Use induction to prove that $z^n = \alpha^n h + \alpha^{n-1} \beta x$ for any $n \in \mathbb{N}$. It is clear that 
\[
z = \begin{pmatrix}
\alpha^1 & \alpha^0 \beta \\
0 & 0
\end{pmatrix}.
\]
Assume 
\[
z^k = \begin{pmatrix}
\alpha^k & \alpha^{k-1} \beta \\
0 & 0
\end{pmatrix} \quad \text{for all} \quad 1 \leq k \leq n-1.
\]
Then 
\[
z^n = \begin{pmatrix}
\alpha & \beta \\
0 & 0
\end{pmatrix} \begin{pmatrix}
\alpha^{n-1} & \alpha^{n-2} \beta \\
0 & 0
\end{pmatrix} = \begin{pmatrix}
\alpha^n & \alpha^{n-1} \beta \\
0 & 0
\end{pmatrix} = \alpha^n h + \alpha^{n-1} \beta x.
\]
By induction, $z^p = \alpha^p h + \alpha^{p-1} \beta x$. Therefore, $z^{[p]} = z^p$ for all $z \in L$.

Since the map is the $p$-th power map $\mathfrak{p}$, we can conclude that this is a $p$-mapping. Since $\mathfrak{p}$ is the only possible $p$-mapping, this map is the unique $p$-mapping for $L$.
\end{proof}

Moreover, there is a relationship between the uniqueness of the \( p \)-mapping and the Killing form.

\begin{Theorem}

Let \( L \) be a finite-dimensional Lie algebra of characteristic \( p > 0 \) with a non-degenerate Killing form. Then, there exists a unique \( p \)-mapping in \( L \).

\end{Theorem}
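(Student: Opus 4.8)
The plan is to use non-degeneracy of $\kappa$ twice: first to force $\ad\colon L\to\operatorname{Der}(L)$ to be injective, and second to show it is surjective, so that $\ad$ becomes an isomorphism of Lie algebras $L\xrightarrow{\ \sim\ }\operatorname{Der}(L)$. Granting this, the map $[p]$ can only be, and will be, defined by transporting the $p$-th power on $\operatorname{Der}(L)\subset\gl(L)$ back to $L$: that is, $a^{[p]}$ is the unique element of $L$ with $\ad(a^{[p]})=(\ad a)^p$. All three axioms of a $p$-mapping will then fall out of the corresponding identities in the associative algebra $\gl(L)$, and uniqueness will be immediate.

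First I would record the two consequences of non-degeneracy. Since $Z(L)=\ker(\ad)$ and any $x\in Z(L)$ satisfies $\kappa(x,y)=\tr(\ad x\circ\ad y)=0$ for all $y$, we get $Z(L)\subseteq\rad(\kappa)=\{0\}$, so $\ad$ is injective. For surjectivity one shows every derivation is inner: given $D\in\operatorname{Der}(L)$, the linear functional $x\mapsto\tr(D\circ\ad x)$ is, by non-degeneracy of $\kappa$, of the form $x\mapsto\kappa(d,x)$ for a unique $d\in L$; putting $E=D-\ad d$ and using the derivation identity $[E,\ad x]=\ad(E(x))$ together with the associativity of the trace and of $\kappa$, one computes $\kappa(E(x),y)=\tr\!\big(E\circ[\ad x,\ad y]\big)=\tr\!\big(E\circ\ad[x,y]\big)=0$ for all $x,y\in L$, so $E(x)\in\rad(\kappa)=\{0\}$ and $D=\ad d$. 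Hence $\ad$ is a Lie algebra isomorphism onto $\operatorname{Der}(L)$.

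Now for existence. For $a\in L$, the Leibniz rule modulo $p$ (the middle binomial coefficients vanish) shows $(\ad a)^p\in\operatorname{Der}(L)$, hence $(\ad a)^p=\ad(c)$ for a unique $c\in L$; set $a^{[p]}:=c$. Axiom~1 holds by construction. For Axiom~2, applying $\ad$ gives $\ad((\alpha a)^{[p]})=(\alpha\ad a)^p=\alpha^p(\ad a)^p=\ad(\alpha^p a^{[p]})$, and injectivity of $\ad$ finishes it. For Axiom~3, work in the associative algebra $A=\gl(L)$: the expansion of Remark~\ref{remp} gives $(\ad a+\ad b)^p=(\ad a)^p+(\ad b)^p+\sum_{i=1}^{p-1}s_i(\ad a,\ad b)$, where $s_i(\ad a,\ad b)$ is the quantity characterised by \eqref{adaxb} applied in $A$. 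Extending $\ad$ to an $F[X]$-linear Lie homomorphism $L\otimes_F F[X]\to\gl(L)\otimes_F F[X]$ and applying it to the identity $(\ad(a\otimes X+b\otimes 1))^{p-1}(a\otimes 1)=\sum_{i=1}^{p-1} i\,s_i(a,b)\otimes X^{i-1}$ that defines the $s_i(a,b)\in L$, one matches coefficients of $X^{i-1}$ to get $\ad(s_i(a,b))=s_i(\ad a,\ad b)$. Combining, $\ad((a+b)^{[p]})=(\ad(a+b))^p=\ad\!\big(a^{[p]}+b^{[p]}+\sum_{i=1}^{p-1}s_i(a,b)\big)$, and injectivity of $\ad$ yields Axiom~3. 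Finally, uniqueness: if $[p]$ and $[p]'$ both satisfy Axiom~1 then $\ad(a^{[p]})=(\ad a)^p=\ad(a^{[p]'})$ for every $a$, so $a^{[p]}=a^{[p]'}$.

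I expect the main obstacle to be the ``every derivation is inner'' step, since that is where non-degeneracy is genuinely needed and it relies on the associativity of $\kappa$ and the identity $[D,\ad x]=\ad(D(x))$; the secondary technical point is the bookkeeping in Axiom~3 showing that $\ad$ intertwines the universal polynomials $s_i$. Once these are established, everything else is a formal cancellation argument using the injectivity of $\ad$.
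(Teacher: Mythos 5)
Your proposal is correct, and it actually supplies an argument where the paper gives none: the paper's ``proof'' of this theorem is only a citation to an external reference, so there is nothing in the text to compare line by line. Your route is the standard Jacobson-style one and all the steps check out. Non-degeneracy of $\kappa$ kills the centre (so $\ad$ is injective) and, via the Zassenhaus computation $\kappa(E(x),y)=\tr\bigl(E\circ\ad[x,y]\bigr)=0$ with $E=D-\ad d$, shows every derivation is inner; then $(\ad a)^p$ is a derivation because the middle binomial coefficients of the Leibniz expansion vanish in characteristic $p$, so it has a unique preimage $a^{[p]}$ under $\ad$, and axioms 1--3 of the paper's Definition of a $p$-mapping follow by transporting the corresponding identities in the associative algebra $\End(L)$ back through the injective map $\ad$. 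Two small points worth making explicit if you write this up: in the surjectivity step, say in one line why $\tr(E\circ\ad z)=0$ for all $z$ (it is exactly the defining property of $d$, namely $\tr(D\circ\ad z)=\kappa(d,z)=\tr(\ad d\circ\ad z)$); and in Axiom 3, note that the coefficients $i\,s_i(a,b)$ determine $s_i(a,b)$ because $i$ is invertible modulo $p$ for $1\le i\le p-1$, which is what lets you read off $\ad(s_i(a,b))=s_i(\ad a,\ad b)$ by comparing coefficients. Also observe that your uniqueness argument uses only $Z(L)=\{0\}$, so it isolates exactly where non-degeneracy of the Killing form is needed for uniqueness versus where it is needed (much more heavily, via innerness of derivations) for existence.
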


\begin{proof}

For the proof, see \cite[Theorem 5.7.11]{N}.
\end{proof}

\section{Restricted Special Linear Lie Algebras}

Since the special linear Lie algebra $\mathfrak{sl}(n, F)$ is a Lie subalgebra of $\gl(n, F)$, which has a $p$-mapping defined by $x \mapsto x^p$, it is natural to ask whether $\mathfrak{sl}(n, F)$ also admits a $p$-mapping.

\begin{Corollary}\label{slnFpmap}
The special linear Lie algebra $\mathfrak{sl}(n,F)$ has a $p$-mapping.
\end{Corollary}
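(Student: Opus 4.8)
The plan is to show that the $p$-th power map $\mathfrak p\colon x\mapsto x^p$ on $\gl(n,F)$, which is a $p$-mapping by Example~\ref{glpmap}, restricts to a $p$-mapping on the subalgebra $\mathfrak{sl}(n,F)$. More generally, I would first record the principle that if $K$ is a Lie subalgebra of a restricted Lie algebra $(L,[p])$ satisfying $K^{[p]}\subseteq K$, then $[p]\big|_K$ makes $K$ restricted. Indeed, condition~2 of the definition of a $p$-mapping is inherited verbatim. For condition~1, note that for $a\in K$ the map $\ad_L a$ carries $K$ into $K$ (as $K$ is a subalgebra) and $a^{[p]}\in K$ by hypothesis, so restricting the identity $\ad_L\!\big(a^{[p]}\big)=(\ad_L a)^p$ to $K$ gives $\ad_K\!\big(a^{[p]}\big)=\ad_L\!\big(a^{[p]}\big)\big|_K=(\ad_L a)^p\big|_K=\big((\ad_L a)\big|_K\big)^p=(\ad_K a)^p$. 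For condition~3, observe that $K\otimes_F F[X]$ is a Lie subalgebra of $L\otimes_F F[X]$, so the iterated bracket $(\ad(a\otimes X+b\otimes 1))^{p-1}(a\otimes 1)$ lies in it and hence each $s_i(a,b)$ lies in $K$; the relation $(a+b)^{[p]}=a^{[p]}+b^{[p]}+\sum_{i=1}^{p-1}s_i(a,b)$ then descends from $L$ to $K$ since every term already lies in $K$. With this principle in hand, the entire proof reduces to a single verification.

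That verification is that $\mathfrak{sl}(n,F)$ is closed under $\mathfrak p$, i.e.\ $\tr(x^p)=0$ whenever $\tr(x)=0$. I would deduce this from the Frobenius trace identity $\tr(x^p)=\tr(x)^p$, which holds over any field of characteristic $p$. Passing to the algebraic closure $\overline F$, the matrix $x$ is triangularizable (see~\ref{ACFLT}): conjugating it to an upper triangular matrix whose diagonal entries are the eigenvalues $\lambda_1,\dots,\lambda_n$ of $x$, the matrix $x^p$ becomes upper triangular with diagonal $\lambda_1^p,\dots,\lambda_n^p$, so $\tr(x^p)=\sum_i\lambda_i^p$. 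Since $t\mapsto t^p$ is additive on $\overline F$, this equals $\big(\sum_i\lambda_i\big)^p=\tr(x)^p$. Because every matrix in $\mathfrak{sl}(n,F)$ has trace $0$, it follows that $x^p\in\mathfrak{sl}(n,F)$, so $\mathfrak{sl}(n,F)^{[p]}\subseteq\mathfrak{sl}(n,F)$, and by the principle above $\mathfrak p\big|_{\mathfrak{sl}(n,F)}$ is a $p$-mapping on $\mathfrak{sl}(n,F)$.

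The main obstacle is the trace identity: one must be careful that $\tr(x^p)$ and $\tr(x)^p$ are compared in $F$ while the eigenvalue computation takes place over $\overline F$, and it is precisely the additivity of the Frobenius endomorphism — the ``freshman's dream'' $(a+b)^p=a^p+b^p$ — that makes the hypothesis $\charr F=p$ indispensable here; in characteristic $0$ the analogous closure statement fails, so the fact that $\gl(n,F)$ and $\mathfrak{sl}(n,F)$ carry $p$-mappings is a genuinely modular phenomenon. Everything else — the three conditions for the restricted map — is routine bookkeeping once the closure property is established.
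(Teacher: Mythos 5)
Your proposal is correct and follows the same overall strategy as the paper: both arguments reduce the corollary to the single closure statement that $\tr(x^p)=0$ whenever $\tr(x)=0$, so that the $p$-th power map of $\gl(n,F)$ restricts to $\mathfrak{sl}(n,F)$. Where you differ is in how that closure is proved. The paper works with the characteristic polynomial: it writes $\chi_x(\lambda)=\lambda^n-\tr(x)\lambda^{n-1}+\cdots$, applies Cayley--Hamilton, raises $\chi_x(x)=0$ to the $p$-th power using the freshman's dream on the commuting powers of $x$, and reads off that the $\lambda^{n-1}$-coefficient of the characteristic polynomial of $x^p$ vanishes. You instead pass to $\overline{F}$, triangularize, and invoke the Frobenius trace identity $\tr(x^p)=\sum_i\lambda_i^p=\bigl(\sum_i\lambda_i\bigr)^p=\tr(x)^p$. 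Your route is arguably cleaner and sidesteps a small logical wrinkle in the paper's version: a monic degree-$n$ polynomial annihilating $x^p$ need not a priori be \emph{the} characteristic polynomial of $x^p$, so the paper's identification of $\chi_{x^p}$ requires an extra word that it does not supply, whereas your eigenvalue computation gives $\tr(x^p)$ directly. You also add something the paper leaves implicit: a careful verification that a subalgebra closed under the $p$-mapping inherits all three axioms of a restricted structure (the paper simply asserts that the restriction is a $p$-mapping once $x^p\in\mathfrak{sl}(n,F)$ is known). That extra bookkeeping is welcome and correct, in particular the observation that the $s_i(a,b)$ computed in $L\otimes_F F[X]$ already lie in $K$ when $a,b\in K$.
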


\begin{proof}

In \ref{glpmap}, the $p$-th power map $\mathfrak{p}: \gl(n,F) \rightarrow \gl(n,F)$, defined by $x \mapsto x^p$, is a $p$-mapping on $\gl(n,F)$. Since $\mathfrak{sl}(n,F)$ is the special linear Lie subalgebra of $\gl(n,F)$, it is natural to check whether the restriction of the $p$-th power map of $\gl(n,F)$, 
$$\text{i.e.,}\quad \mathfrak{p}|_{\mathfrak{sl}(n,F)}: \mathfrak{sl}(n,F) \rightarrow \gl(n,F), \quad \text{defined by}\quad x \mapsto x^p \quad\text{for all}\;\; x \in \mathfrak{sl}(n,F),$$
is also the $p$-th power map on $\mathfrak{sl}(n,F)$. In other words, we want to verify that the map $\mathfrak{p}|_{\mathfrak{sl}(n,F)}$ satisfies
$$\mathfrak{p}: \mathfrak{sl}(n,F) \rightarrow \mathfrak{sl}(n,F),$$
i.e., that 
$$x^p \in \mathfrak{sl}(n,F)\quad \text{for all}\;\; x \in \mathfrak{sl}(n,F).$$
This is equivalent to checking that $\tr(x^p) = 0$ for all $x \in \mathfrak{sl}(n,F)$.

Let $x \in \mathfrak{sl}(n,F)$. Then $\tr(x) = 0$. The characteristic polynomial of $x$ is
$$\chi_x(\lambda) = \lambda^n - \tr(x)\lambda^{n-1} + a_{n-2}\lambda^{n-2} + \cdots + a_0 = \lambda^n + a_{n-2}\lambda^{n-2} + \cdots + a_0 = 0.$$
By the Cayley-Hamilton theorem, we have
$$\chi_x(x) = x^n + a_{n-2}x^{n-2} + \cdots + a_0 = 0.$$
Taking the $p$-th power of this equation, we obtain
$$\{\chi_x(x)\}^p = (x^p)^n + {a_{n-2}}^p(x^p)^{n-2} + \cdots + {a_0}^p = 0.$$
Thus,
$$\chi_{x^p}(x^p) = (x^p)^n + {a_{n-2}}^p(x^p)^{n-2} + \cdots + {a_0}^p = 0.$$
This implies that the characteristic polynomial of $x^p$ is
$$\chi_{x^p}(\lambda) = \lambda^n - {a_{n-2}}^p\lambda^{n-2} + \cdots + {a_0}^p.$$
Note that the coefficient of $\lambda^{n-1}$ in the characteristic polynomial of $x^p$ is zero. Therefore, $\tr(x^p) = 0$, which means that $x^p \in \mathfrak{sl}(n,F)$. This shows that the restriction map $\mathfrak{p}|_{\mathfrak{sl}(n,F)}$ is indeed the $p$-th power map on $\mathfrak{sl}(n,F)$, and hence it is a $p$-mapping on $\mathfrak{sl}(n,F)$.
\end{proof}

\begin{Remark}\label{sl2pmap}

\item[1.] The restricted map of $\mathfrak{p}$ on $\mathfrak{sl}(n,F)$, denoted $\mathfrak{p}|_{\mathfrak{sl}(n,F)}$, is a $p$-mapping of $\mathfrak{sl}(n,F)$.

\item[2.] $\mathfrak{sl}(2,F)$ has the $p$-mapping $\mathfrak{p}: x \mapsto x^p$, so $\mathfrak{sl}(2,F)$ is a restricted Lie algebra. In fact, $\mathfrak{sl}(2,F)$ with the usual basis
$$ e = \begin{pmatrix}
 0 & 1 \\
 0 & 0 \\
\end{pmatrix}, \quad
 f = \begin{pmatrix}
 0 & 0 \\
 1 & 0 \\
\end{pmatrix}, \quad \text{and} \quad
 h = \begin{pmatrix}
 1 & 0 \\
 0 & -1 \\
\end{pmatrix} $$
satisfies $e^{[p]} = 0$, $f^{[p]} = 0$, and $h^{[p]} = h$.

\end{Remark}

\subsection{$\fsl_2$: Not a Restricted Lie Algebra}

From Corollary \ref{sl2pmap}, $\mathfrak{sl}(2, F)$ admits a $p$-mapping. It is therefore natural to ask whether $\mathfrak{sl}(2, F)$ also admits a $p$-mapping. Recall that $\operatorname{char} F > 2$ in the case of $\mathfrak{sl}(2, F)$ and $\operatorname{char} F = 2$ in the case of $\mathfrak{fsl}(2, F)$.

However, it turns out that $\mathfrak{fsl}(2, F)$ does \emph{not} admit a $p$-mapping. As a result, $\mathfrak{fsl}(2, F)$ is not a restricted Lie algebra.

\begin{Theorem}

$\mathfrak{fsl}(2, F)$ has no $p$-mapping, so it is not a restricted Lie algebra.

\end{Theorem}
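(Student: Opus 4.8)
The plan is to show that the first axiom in the definition of a $p$-mapping already fails for $\fsl(2,F)$: I will produce an element $a\in\fsl(2,F)$ for which $(\ad a)^{p}$ is not an inner derivation, i.e.\ for which there is no $c\in\fsl(2,F)$ with $\ad c=(\ad a)^{p}$. Since $p=2$ here, if some map $[2]\colon\fsl(2,F)\to\fsl(2,F)$ were a $p$-mapping, then axiom~1 would force $(\ad a)^{2}=\ad(a^{[2]})$, placing $(\ad a)^{2}$ inside the image $\ad(\fsl(2,F))\subseteq\gl(\fsl(2,F))$; exhibiting a single $a$ for which this membership fails rules out every candidate $p$-mapping at once, so axioms~2 and~3 never enter.

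First I would record the structure constants in characteristic $2$. From $[h,e]=e$, $[h,f]=-f=f$, $[e,f]=h$ and skew-symmetry one gets $[e,h]=-e=e$ and $[f,e]=-h=h$. Taking $a=e$, a direct computation on the basis $(e,f,h)$ gives $(\ad e)^{2}(e)=\ad e([e,e])=0$, $(\ad e)^{2}(f)=\ad e(h)=[e,h]=e$, and $(\ad e)^{2}(h)=\ad e(e)=0$; thus $(\ad e)^{2}$ is the rank-one operator sending $f\mapsto e$ and annihilating $e$ and $h$.

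Next I would check that this operator is not of the form $\ad c$ for any $c\in\fsl(2,F)$. Writing $c=\alpha e+\beta f+\gamma h$, the relations give $\ad c(e)=\gamma e+\beta h$ and $\ad c(f)=\gamma f+\alpha h$, all signs collapsing because $\operatorname{char}F=2$. Equating $\ad c$ with $(\ad e)^{2}$ forces $\ad c(e)=0$, hence $\beta=\gamma=0$, and then $\ad c(f)=\alpha h$, which can never equal $e$ since $e$ and $h$ are linearly independent. Therefore $(\ad e)^{2}\notin\ad(\fsl(2,F))$, and by the opening paragraph $\fsl(2,F)$ admits no $p$-mapping; in particular it is not a restricted Lie algebra.

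I do not anticipate any real obstacle: the core of the argument is a short explicit computation that invokes only axiom~1. The one point demanding attention is the sign bookkeeping in characteristic $2$, where $-e=e$, $-f=f$ and $-h=h$ — precisely the feature that separates $\fsl(2,F)$ from $\mathfrak{sl}(2,F)$. It is worth noting, though not logically required, that simplicity of $\fsl(2,F)$ (already proved) makes $\ad$ injective, so $\ad(\fsl(2,F))$ is a genuine $3$-dimensional subspace of the $9$-dimensional $\gl(\fsl(2,F))$ that fails to contain $(\ad e)^{2}$; the obstruction is thus intrinsic, not an artefact of a possibly non-faithful adjoint action.
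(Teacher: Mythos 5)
Your proposal is correct and follows essentially the same route as the paper: both apply axiom~1 of the $p$-mapping to $a=e$, compute $(\ad e)^2$ on the basis $\{e,f,h\}$, and show no $c\in\fsl(2,F)$ satisfies $\ad c=(\ad e)^2$. If anything your version is slightly more careful, since you solve $[c,e]=0$ for a general $c=\alpha e+\beta f+\gamma h$ and rule out every scalar multiple of $e$ at once, whereas the paper's proof only explicitly tests $\alpha\in\{0,e\}$.
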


\begin{proof}

Let us assume that \( \mathfrak{fsl}(2,F) \) has a \( p \)-mapping \( [p] : \mathfrak{fsl}(2,F) \to \mathfrak{fsl}(2,F) \) defined by \( x \mapsto x^{[p]} \). By the first condition of the \( p \)-mapping definition in \ref{pmap1}, one of the generators, \( e \), should satisfy \( \ad e^{[2]} = (\ad e)^2 \). This implies that there exists an element \( \alpha = e^{[2]} \in \mathfrak{fsl}(2,F) \) such that \( \ad \alpha = (\ad e)^2 \). Recall that \( (\ad e)^2(x) = [e, [e, x]] \) for all \( x \in \mathfrak{fsl}(2,F) \).

Then, applying \( (\ad e)^2 \) on each generator, we get:

\[
(\ad e)^2(e) = [e, [e, e]] = [e, 0] = 0
\]

\[
(\ad e)^2(h) = [e, [e, h]] = [e, -e] = 0
\]

\[
(\ad e)^2(f) = [e, [e, f]] = [e, h] = -e
\]

Therefore, \( \alpha \) should satisfy the following three equations:

\begin{equation}\label{alpe}
(\ad \alpha)(e) = [\alpha, e] = 0
\end{equation}

\begin{equation}\label{alph}
(\ad \alpha)(h) = [\alpha, h] = 0
\end{equation}

\begin{equation}\label{alpf}
(\ad \alpha)(f) = [\alpha, f] = -e
\end{equation}

Recall that \( \mathfrak{fsl}(2,F) \) is 3-dimensional and has the basis \( \{e, f, h\} \). From \ref{alpe}, \( \alpha = ke \) for some \( k \in F \). Since \( F \) has characteristic 2, \( \alpha \) must be either \( e \) or 0. Clearly, \( \alpha \) cannot be 0, because if it were, then \( (\ad \alpha)(f) = [0, f] = 0 \), which contradicts \ref{alpf}. Let \( \alpha = e \). Then, \( (\ad \alpha)(h) = [e, h] = -e \neq 0 \), which contradicts \ref{alph}. Therefore, there is no such \( \alpha \) in \( \mathfrak{fsl}(2,F) \). By contradiction, \( \mathfrak{fsl}(2,F) \) does not have a \( p \)-mapping.
\end{proof}

\chapter{Conclusions}

Lie algebras have many theorems that help to construct Lie theory. What about
modular Lie algebras? Since a modular Lie algebra is also a Lie algebra, it shares a
similar context with Lie algebras and, in general, Lie theory. However, as we’ve seen
in this main project, Lie theory theorems do not always hold. From this perspective,
Lie theory and modular Lie theory are different.

For example, the Weyl module and Lusztig’s conjecture are introduced to study
Weyl’s theorem, the Weyl character formula, and, more generally, the characters of
representations in prime characteristic fields. The invaluable insight gained from the
limitation of Weyl’s theorem in a prime characteristic field is that reduction modulo
p changes the results of calculations and the properties of Lie algebras.

Another key point is that a semisimple Lie algebra plays a crucial role in Lie
theorems in a prime characteristic field, such as the preservation of the Jordan decom-
position. Semisimple Lie algebras are related to solvable Lie algebras. Furthermore,
their representations are also an important topic in modular Lie theory.

Since the definition of the $p$-mapping is given, we want to understand how the
$p$-mapping is used in modular Lie theory. The $p$-mapping seems quite similar to
the Frobenius map in field theory. With this $p$-mapping, modular Lie theory can be
expanded to include the homology of Lie algebras.

Modular Lie algebras and their representations seem similar to classical Lie theory
but are not exactly the same. In fact, the representations of modular Lie algebras have
interesting properties. For example, Weyl’s theorem, the Jordan decomposition, and
irreducible representations of Lie algebras in this main project are related to modular
Lie algebra representations. It is well-known that characters reveal many properties
of their representations, so it would be interesting to see the properties of characters
of modular Lie algebras (or restricted Lie algebras). Studying $p$-adic representations
would also provide deeper insights into modular Lie algebra representations. Many
branches of modular Lie theory would offer intriguing motivations to study Lie alge-
bras.

\bibliography{BiblioMainProj}{}
\bibliographystyle{plain}


\newpage
\printindex

\end{document}